\documentclass{article}
\usepackage{graphicx} 

\usepackage{booktabs}
\usepackage{float}
\usepackage{amsfonts,amssymb,amsmath}
\usepackage{amsmath}
\usepackage{amsthm}
\usepackage{paralist}
\usepackage{subcaption}
\usepackage{float}
\usepackage{url}
\usepackage{xcolor}
\usepackage{bbm} 
\usepackage[round]{natbib}
\usepackage[hidelinks]{hyperref}
\usepackage{algorithm}
\usepackage{algorithmic}

\renewcommand{\le}{\leqslant}
\renewcommand{\ge}{\geqslant}
\renewcommand{\emptyset}{\varnothing}

\renewcommand{\geq}{\geqslant}

\newcommand{\bsr}{\boldsymbol{r}}

\newcommand{\eqd}{\stackrel{\mathrm{d}}{=}}

\newcommand{\bsone}{\boldsymbol{1}}
\newcommand{\phz}{\phantom{0}}

\newcommand{\INPUT}{\item[\textbf{Input:}]}
\newcommand{\OUTPUT}{\item[\textbf{Output:}]}

\newcommand{\bsx}{\boldsymbol{x}}

\newcommand{\bsw}{\boldsymbol{w}}
\newcommand{\bsy}{\boldsymbol{y}}
\newcommand{\bsz}{\boldsymbol{z}}

\newcommand{\bszero}{\boldsymbol{0}}
\newcommand{\diag}{\mathrm{diag}}
\newcommand{\dist}{\mathrm{dist}}
\newcommand{\rqmc}{\mathrm{RQMC}}
\newcommand{\mc}{\mathrm{MC}}
\newcommand{\real}{\mathbb{R}}
\newcommand{\natu}{\mathbb{N}}
\newcommand{\tran}{\mathsf{T}}

\newcommand{\runif}{\mathbf{U}} 
\newcommand{\rd}{\,\mathrm{d}}
\newcommand{\vol}{\mathrm{vol}}

\newcommand{\giv}{\!\mid\!}
\newcommand{\lattice}{\mathrm{Lat}}
\newcommand{\net}{\mathrm{Net}}

\newcommand{\e}{\mathbb{E}}
\newcommand{\var}{\mathrm{var}}

\newcommand{\simiid}{\stackrel{\mathrm{iid}}{\sim}}

\newcommand{\olt}{\overline\tau}
\newcommand{\ult}{\underline\tau}
\newcommand{\ols}{\overline S}
\newcommand{\uls}{\underline S}

\newcommand{\cx}{\mathcal{X}}

\newtheorem{prop}{Proposition}

\title{Walk on spheres and Array-RQMC}
\author{Valérie N. P. Ho\\Stanford University \and Art B. Owen\\Stanford University}
\date{July 2026}

\begin{document}

\maketitle
\begin{abstract}
We use Array-RQMC sampling in a walk on spheres (WoS) algorithm for Dirichlet boundary value problems.  On a collection of problems, we find that Array-RQMC-WoS reduces the Monte Carlo MSE or variance by factors ranging from $71$-fold to $3087$-fold at $n=2^{17}$ trajectories.  The variance is known to be $o(1/n)$ but attains empirical rates between $n^{-1.4}$ and $n^{-1.8}$ in our examples. A simpler RQMC-WoS algorithm studied in \cite{rqmcwos:tr} has more theoretical support but only reduced variance by 1.8 to 10.7-fold on the same set of examples. In order to explain this improvement, we introduce a column-wise mean dimension of the RQMC error based on Sobol' indices. It matches the usual mean dimension for Monte Carlo and the mean dimension of a dual lattice error for randomized lattices. We find for a gasket example from \cite{wosoneweekend} that the mean dimension of Array-RQMC-WoS errors is much higher than an analogous Array-MC-WoS algorithm has.
\end{abstract}

\section{Introduction}

The walk on spheres (WoS) algorithm is a grid-free Monte Carlo method for solving a boundary value problem (BVP). In the simplest setting, a function $u$ has Laplacian $\Delta u=0$ in the interior of a bounded closed set $\Omega\subset\real^d$ and has known boundary values $u(\bsz)=b(\bsz)$ for $\bsz\in\partial\Omega$. The goal is to compute $u(\bsz_0)$ at some point $\bsz_0\in\Omega^\circ$.  More general problems have $\Delta u(\bsz)=g(\bsz)$ for a nonzero source function $g$. The boundary condition we just described is the Dirichlet condition.  There are other boundary conditions of interest but this paper does not consider them.

When $g\equiv0$, it is known that $u(\bsz_0)$ is the expected value of $b(\bsz_\tau)$ where $\bsz_t$ for $t\in[0,\infty)$ is Brownian motion in $\real^d$ starting at $\bsz_0$ and $\tau =\inf\{t\geq0\mid \bsz_t\in\partial\Omega\}$.  When that Brownian motion first reaches the sphere $S(\bsz_0,r)=\{\bsz\in\real^d\mid \Vert\bsz-\bsz_0\Vert =r\}$,  it does so with a uniform distribution on that sphere.  Instead of simulating spatial Brownian motion directly, we can simply take $\bsz_1\sim\runif(S(\bsz_0,r_1))$ where $r_1=\dist(\bsz_0,\partial\Omega)$ is the radius of the largest sphere centered on $\bsz_0$ that is contained within $\Omega$.  The walk continues taking steps $\bsz_k\sim\runif(S(\bsz_{k-1},r_k))$ for $r_k=\dist(\bsz_{k-1},\partial\Omega)$. The stopping rule is to terminate the walk at step $\tau =\min\{ k\ge1\mid \dist(\bsz_k,\partial\Omega)< \varepsilon\}$. Then we project $\bsz_\tau$ onto $\partial\Omega$ finding a point $\bar\bsz_\tau$ that is any minimizer of $\dist(\bsz_\tau,\bsz)$ over $\bsz\in\partial\Omega$ and return the value $u(\bar\bsz_\tau)=b(\bar\bsz_\tau)$.  A Monte Carlo algorithm repeats this process $n$ times independently and returns $\hat u(\bsz_0) = (1/n)\sum_{i=1}^n b(\bar\bsz_{i,\tau(i)})$.  When there is a nonzero source term $g$, the WoS algorithm also makes use of function values $g(\bsw_k)$ for $\bsw_k$ uniformly distributed in the ball of radius $r_k$ centered at $\bsz_{k-1}$ as described in Section \ref{sec:wos} below.

The WoS algorithm originates with \cite{mull:1956}.  Its applicability has been greatly expanded by Sawhney and Crane and others, in a series of papers starting with \cite{sawh:cran:2020} that consider large scale computations and many different BVPs.  A quasi-Monte Carlo (QMC) version of WoS was given by \cite{masc:kara:hwan:2004}. The accuracy of QMC is difficult to study because the integrands in WoS are typically supported in some set $\Theta\subset[0,1)^k$ that does not have axis-aligned boundaries.  The integrands then have infinite variation in the sense of Hardy and Krause, which makes the Koksma-Hlawka inequality uninformative.  

A randomized quasi-Monte Carlo (RQMC) version of WoS was studied in \cite{rqmcwos:tr}.  In scrambled net RQMC, we only need the integrand to be in $L_2$ to establish that the variance is $o(1/n)$ as $n\to\infty$, and for finite $n$, the variance is also bounded by some constant $\Gamma$ times the Monte Carlo (MC) variance.  Moreover, for a set $\Theta\subset[0,1)^k$ where $\partial\Theta$ has finite $k-1$ dimensional Minkowski content, \cite{he:wang:2015} establish an RQMC variance of $O(n^{-1-1/k})$ for integration of the indicator function $\bsone_{\Theta}$. Under that same Minkowski content condition, \cite{liu:2025} establishes the same rate for integration of $g\times\bsone_{\Theta}$ when $g$ obeys a mild boundary growth condition.  \cite{rqmcwos:tr} use these results to show that certain RQMC-WoS integrals have errors that decay as $O(n^{-1-1/k})$. In \cite{rqmcwos:tr}, we estimated variance reduction factors ranging from $1.8$ to $10.7$ fold at $n=2^{17}$ in a range of problems based on some regressions of error versus $n$ on the log-log scale. We also saw empirical convergence rates better than the MC rate, following $O(n^{-1-\delta})$ for small $\delta>0$. Those rates were supported by conditions on $\Omega$ sufficient to control the $k-1$ dimensional Minkowski content of $\partial\Theta$ where the WoS problem was to estimate $\Pr(\bsx\in\Theta)$ under $\bsx\sim\runif([0,1)^k)$.

In this paper, we replace RQMC sampling by Array-RQMC sampling of \cite{lecu:leco:tuff:2008}. We use the Hilbert space-filling curve proposed by \cite{gerber2015sequential} for sequential QMC to align the Array-RQMC steps. The resulting Array-RQMC-WoS algorithms yield much better convergence rates than the RQMC-WoS algorithms in \cite{rqmcwos:tr}. Empirical variance rates better than $O(n^{-1.5})$ are very common as are extremely large variance reduction factors compared to those of RQMC-WoS.  The variance reductions are large enough that the bias from $\varepsilon>0$ can be detected in some examples. That bias is commonly ignored in WoS problems. Past empirical success of Array-RQMC goes well beyond the known theoretical explanations and that remains true for Array-RQMC-WoS in this setting where even RQMC-WoS is difficult to study.

This paper is organized as follows.  Section~\ref{sec:notation} introduces further notation, and gives background on RQMC, Array-RQMC,  Hilbert curves and defines our Array-RQMC-WoS algorithm.  Section~\ref{sec:numerical}  presents some empirical results where Array-RQMC-WoS brings very large variance reductions. {The code used for those numerical examples is publicly available at \url{https://github.com/hoval58/RQMC-WoS}. Section~\ref{sec:variants} considers variants of the algorithm including the use of Fibonacci lattices, alternative ways to handle converged/inactive walks, and algorithms that take a fixed number of steps. Section~\ref{sec:explanations} investigates an effective dimension explanation of the accuracy ranking where Array-RQMC-WoS is better than RQMC-WoS which is better than MC-WoS. That explanation uses a column-wise effective dimension measure. It also compares the discrepancy of converged points when $\Omega$ is the unit disk and $\bsz_0=(t,0)^\tran$.  In this setting, the true distribution is known and the Array-RQMC-WoS points have lower discrepancy than the RQMC-WoS points which in turn have lower discrepancy than the MC-WoS points. Section~\ref{sec:discussion} has some concluding remarks.

\section{Background and notation}\label{sec:notation}

Here we provide more details about WoS as well as QMC, RQMC, Hilbert space-filling curves, and Array-RQMC. This allows us to define RQMC-WoS and Array-RQMC-WoS and to summarize known results.

We give some notation here. Additional notation is introduced where it is used.
We use $\natu$ for the set of positive integers and for a set $u$ we use $|u|$ for its cardinality. For random variables $A$ and $B$, $A\eqd B$ means that they have the same distribution.

We use $\Vert\cdot\Vert$ to denote Euclidean distance.  For a point $\bsz\in\real^d$ and a non-empty set $\Theta\subset\real^d$ we use
$\dist(\bsz,\Theta) = \inf\{\Vert\tilde\bsz-\bsz\Vert\mid \tilde\bsz\in\Theta\}$. For a closed set $\Theta$, the infimum is attained at some not necessarily unique point $\bar \bsz$. We use $\bsone_\Theta$ for the function taking the value $1$ on the set $\Theta$ and $0$ on its complement.

For $\bsz\in\real^d$ and $r\ge0$, we let 
$B(\bsz,r)=\{\tilde\bsz\in\real^d\mid \Vert\tilde\bsz-\bsz\Vert\le r\}$
and
$S(\bsz,r)=\{\tilde\bsz\in\real^d\mid \Vert\tilde\bsz-\bsz\Vert=r\}$ denote
balls and spheres, respectively. We use $B_d$ and $S_d$ when we want to emphasize the dimension. The Lebesgue measure on $\real^d$ is denoted by $\lambda_d$.

The Laplacian of a twice differentiable function $u:\real^d\to\real$ is denoted $\Delta u = \sum_{j=1}^d\partial^2u/\partial z_j^2$. This is the positive semi-definite Laplacian. Some authors use $\Delta u$ to denote a negative semi-definite Laplacian equal to our $-\Delta u$.

\subsection{WoS}\label{sec:wos}

We use WoS to solve Dirichlet BVPs.  There, we have a set $\Omega\subset\real^d$ which is the closure of a bounded open domain.  We want to solve for $u(\bsz_0)$ where $\bsz_0\in\Omega^\circ$. We are given
\begin{equation}\label{eq:Dirichlet}
\begin{aligned}
    \Delta u(\bsz) &= g(\bsz), && \bsz \in \Omega, \quad\text{and}\\
    u(\bsz) &= b(\bsz), && \bsz \in \partial\Omega,
\end{aligned}
\end{equation}
for a source function $g$ and a boundary function $b$.
Replacing $\Delta u$ by the negative semi-definite Laplacian is equivalent to using the usual Laplacian with $-g$ as the source.

When $g\equiv 0$, the solution to \eqref{eq:Dirichlet} is given by $u(\bsz_0)=\e( b(\bsz_\tau))$ where $\bsz_t$ is $d$-dimensional Brownian motion starting from $\bsz_0$ at time $t=0$ and $\tau = \inf\{t\ge0\mid \bsz_t\in\partial\Omega\}$. As described in the introduction, we iterate $\bsz_k=\bsz_{k-1}+\runif(S(\bszero,r_k))$. 
It is called a \textit{walk on spheres} because it steps from sphere to sphere within $\Omega$.

The WoS algorithm will ordinarily never reach $\partial\Omega$. We stop the algorithm at step $\tau = \min\{k\in\natu\mid \dist(\bsz_k,\partial\Omega)<\varepsilon\}$.  Then
we let $\bar\bsz_\tau$ be the projection of $\bsz_\tau$ on the closed set $\partial\Omega$ with an arbitrary choice when that projection is not unique. We return the value $b(\bar\bsz_\tau)$ as an almost unbiased estimate of $u(\bsz_0)$.  For the cases we study, the number of steps to the boundary is $O(\log(1/\varepsilon))$ using a result from \cite{bind:brav:2012}. In this sense, the boundary `attracts' the walk. The bias from stopping short of $\partial\Omega$ is less well understood but extensive computations in \cite{masc:hwan:2003} show that it is regularly $O(\varepsilon)$.  It is then recommended to use small $\varepsilon$ in WoS.

A full Monte Carlo estimate is
$$
\hat\mu_{\mc}(\bsz_0)=\frac1n\sum_{i=1}^nb(\bar\bsz_{i,\tau(i)})
$$
where the $i$th sampled trajectory has components $\bsz_{ik}$ for $0\le k\le \tau(i)$ with all $\bsz_{i0}=\bsz_0$ and $\tau(i)$ is the number of steps it takes for the $i$th walk to enter $\partial\Omega_\varepsilon = \{\bsz\in\real^d\mid \dist(\bsz,\partial\Omega)<\varepsilon\}$.

When the source function $g$ is nonzero, the WoS algorithm is modified to include some function evaluations of $g$ along the sample path. We present the algorithm from \cite{rqmcwos:tr} which is derived using the account in \cite{sawh:mill:gkio:cran:2023}. We use the following Dirichlet Green's functions for two and three dimensional balls,
$$
G_2^{B(\bsz,r)}(\bsz,\bsw)=
\frac{\log(r\Vert\bsw-\bsz\Vert^{-1})}{2\pi}
\quad\text{and}\quad
G_3^{B(\bsz,r)}(\bsz,\bsw)=
\frac1{4\pi}\Bigl(\frac1{\Vert\bsw-\bsz\Vert}-\frac1r\Bigr).
$$
At the same time that we generate $\bsz_k\sim\runif(S(\bsz_{k-1},r_k))$, we also sample $\bsw_k\sim\runif(B(\bsz_{k-1},r_k))$. Then, the estimate is
\begin{align}\label{eq:woswithsource2}
\hat u(\bsz_0)
=
b(\bar{\bsz}_{\tau})
-\sum_{k=1}^{\tau}
\vol(B(\bsz_{k-1},r_k))\,
G_d^{B(\bsz_{k-1},r_k)}(\bsz_{k-1},\bsw_k)\, g(\bsw_k)
\end{align}
that we average over $n$ independent replicates using independent vectors 
\begin{align*}
\bsz_{k}&\sim \runif(S(\bsz_{k-1},r_k))\eqd \bsz_{k-1} +r_k\runif(S(\bszero,1))\quad\text{and}\\
\bsw_k&\sim \runif(B(\bsz_{k-1},r_k))\eqd\bsz_{k-1}+r_k\runif(B(\bszero,1))
\end{align*}
each time.

In MC sampling, we use transformations $\psi_0=\psi_{0,d}:[0,1)^{s_0}\to S_d(\bszero,1)$ and $\psi_1=\psi_{1,d}:[0,1)^{s_1}\to B_d(\bszero,1)$ where $\psi_{0,d}(\bsx)\sim\runif(S_d(\bszero,1))$ for $\bsx\sim\runif([0,1)^{s_0})$ and $\psi_{1,d}(\bsx)\sim\runif(B_d(\bszero,1))$ for $\bsx\sim\runif([0,1)^{s_1})$.  To run WoS for $K$ steps, we may use $K(s_0+s_1)$ uniform variables when there is a source term and $Ks_0$ uniform variables when there is no source term. Standard algorithms have $s_0=d-1$ and $s_1=d$.  Our two and three dimensional sphere samplers are $\psi_{0,2}(x)=\theta(x):=(\cos(2\pi x),\sin(2\pi x))$ for $x\in[0,1)$ and $\psi_{0,3}:[0,1)^2\to S_3(\bszero,1)$ from the hat-box theorem as described in \cite{rqmcwos:tr}. We use $\psi_{1,2}(\bsx)=(\sqrt{x_1}\cos(2\pi x_2),\sqrt{x_1}\sin(2\pi x_2))$ for $\bsx\sim\runif([0,1)^2)$ to sample $\runif(B_2(\bszero,1))$. None of our examples require samples from $B_3(\bszero,1)$.

\subsection{RQMC sampling}\label{sec:sampling}

We assume some familiarity with QMC and RQMC.  For background, see \cite{nied:1992}, \cite{dick:pill:2010} or \cite{practicalqmc}. As in MC, we use points $\bsx_i\in[0,1)^s$ to estimate $\int_{[0,1)^s}f(\bsx)\rd\bsx$ by $(1/n)\sum_{i=1}^nf(\bsx_i)$. In RQMC, each $\bsx_i\sim\runif([0,1)^s)$ individually while collectively, the points have low discrepancy, meaning that the discrete uniform distribution on those $n$ points is close to the continuous uniform distribution on $[0,1)^s$. Plain QMC uses deterministic points with the second property. To run WoS for $K$ steps, we need $K(s_0+s_1)=K(2d-1)$ uniform variables when there is a source term and $Ks_0=K(d-1)$ uniform variables when there is no source term. 

We use QMCPy \citep{QMCPy} to construct some of our RQMC points.  The RQMC computations in \cite{rqmcwos:tr} showed very nearly equal performance for multiple RQMC constructions. In this paper, we consider only two constructions. The first consists of Sobol' points \citep{sobo:1967:russ} with direction numbers from \cite{joe:kuo:2008} and the matrix scramble of \cite{mato:1998:2}.  This is one of the most commonly considered RQMC algorithms. 

The other major branch of RQMC consists of randomly shifted lattice rules. Lattice rules are interesting for WoS because for $d=2$, the sample points $\bsz_k$ are periodic functions of the $k$ uniform variables used to generate the walk.  Periodicity favors lattice rules \citep{sloa:joe}, although the full benefit of lattice sampling also requires smoothness.  Following \cite{l2018sorting}, we use Korobov lattice rules. These are points of the form $\bsx_i = i\bsz/n \bmod 1$ for $i=1,\dots,n$ with $\bsz=(1,a,a^2 \mod n,\dots,a^{s} \mod n)$ for a carefully chosen integer $a$ in $\{2,\dots,n-1\}$, with $\gcd(a,n)=1$. To pick $a$, we use \textit{LatNet Builder} from \cite{lecuyer2022latnet}, restricting the search to Korobov generating vectors and minimizing the $\mathcal{P}_2$ criterion for each value of $n$. Finally, we randomize the lattice rule by generating a single random shift $\Delta\sim\runif([0,1)^s)$ and adding it modulo $1$, coordinatewise, to every point. Note that in \cite{rqmcwos:tr}, we used the default rank-1 lattice rule implemented in QMCPy, which is based on generating vectors from Frances Kuo, not customized to specific values of $n$. In most of our examples, these lattices performed slightly worse than the Korobov lattices when used within the Array-RQMC algorithm.
}

RQMC-WoS \citep{rqmcwos:tr} generates $n$ trajectories through $\Omega$. They all start at $\bsz_0$. It uses $n$ RQMC points $\bsx_i\in[0,1)^s$ to advance $\bsz_{1,k-1},\dots,\bsz_{n,k-1}$ to  $\bsz_{1k},\dots,\bsz_{nk}$ and if necessary sample the source function.  The update for $\bsz_{i,k-1}$ uses the point $\bsx_i$. For converged points $\bsz_{ik}\in\partial\Omega_\varepsilon:=\{\bsz\in\real^d\mid \dist(\bsz,\partial\Omega)<\varepsilon\}$, we do not have to compute $\bsz_{i,k+1}$.  To handle $K$ steps, we use an RQMC point set in $[0,1)^{Ks}$ and if any points have not converged by step $K$, then we use $\bar \bsz_{iK}$.

The RQMC point sets we use all have the following stratification property: for each $j=1,\dots,s$, each interval
$[c/n,(c+1)/n)$, where $c=0,1,\dots,n-1$, contains exactly one of
$x_{1j},\dots,x_{nj}$.

\subsection{Array-RQMC}

An RQMC algorithm to sample $K$ steps of a stochastic process like WoS requires $Ks$ uniform variables, that is, we need to find a matrix in $[0,1)^{n\times Ks}$ to sample all of our trajectories.  QMC and RQMC performance can degrade in high dimensions. Array-QMC and Array-RQMC use lower dimensional point sets. An up-to-date account of Array-RQMC is in \cite{l2018sorting} along with empirical results.

\cite{leco:tuff:2004} introduced an Array-QMC algorithm for a Markov chain $z_t\in\real$ when that chain updates via $z_{t+1} \gets \phi( z_t,u_{t+1})$ for $u_{t+1}\sim \runif(0,1)$. To advance $n$ walkers, they used a QMC point set $\bsx_i\in[0,1)^2$ at each step. They sort the walkers so that $z_{(1)k}\le z_{(2)k}\le \cdots \le z_{(n)k}$ and also sort the QMC points $\bsx_i$ so that $\bsx_{(1)1}\le \bsx_{(2)1}\le\cdots\le \bsx_{(n)1}$.  Then they advance $z_{(i)k}$ to $z_{(i),k+1}=\phi(z_{(i)k},\bsx_{(i)2})$ (which are then sorted before taking the next step).

When the sample distribution of $\bsx_1,\dots,\bsx_n$ is nearly $\runif([0,1)^2)$, it means that the variables $\bsx_{(i)2}$ used in the updates are nearly independent of the walker locations $z_{(i)k}$, which is a desirable property. \cite{leco:tuff:2004} give conditions under which the sample distribution of the walkers at step $k$ approaches the true $k$-step distribution as $n\to\infty$. Many QMC point sets are constructed with a first column that is already sorted, commonly taking values $(i-1)/n$.  Then, the $d=1$ version of Array-QMC only needs to apply the second column after sorting the walkers. 

\cite{lecu:leco:tuff:2008} present an Array-RQMC algorithm using $\cx_k\in\real^{n\times (s+1)}$ to update $n$ walkers in $\real^d$ at step $k$ where $s$ need not equal $d$.  Each $\cx_k$ is a randomization of a common QMC point set $\cx$. The first column of $\cx_k$ is aligned with the values $h(\bsz_{ik})$ for some sorting function $h:\real^d\to\real\cup\{\infty\}$. We use $h(\bsz_{ik})=\infty$ to mark walks that have converged into $\partial\Omega_\varepsilon$ and do not need further updates. Their objective is to estimate $\mu = \e( \sum_{k=0}^\tau c_k(\bsz_k))$ where $\tau$ is a stopping time and $c_k$ is a cost function.  They give conditions under which the Array-RQMC estimator of $\mu$ is unbiased and, for $d=1$, has variance $O(n^{-3/2})$. When, as is common, the first column has the Latin hypercube property, one sorts the $\bsz_{ik}$ according to $h(\bsz_{ik})$ and then uses the last $s$ columns of $\cx$ to advance the walkers. Some alternative ways to handle converged samplers are discussed in Section~\ref{sec:variants}.

We can incorporate WoS into their framework as follows.  If there is a source function we expand the state space using elements $\bsy_k=(\bsz_k,\bsw_{k+1})$. The update to $\bsy_{k+1}$ depends on $\bsy_k$ only through $\bsz_k$. Then the cost function is
$$c(\bsy_k)= \begin{cases}
    -\vol(B(\bsz_{k},r_{k+1}))\,G_d^{B(\bsz_{k},r_{k+1})}(\bsz_{k},\bsw_{k+1})\, g(\bsw_{k+1}),  &\textrm{if } k<\tau\\
    b(\bar{\bsz}_{\tau}), \quad&\textrm{if } k=\tau\\
    0, &\textrm{if } k>\tau.
\end{cases}$$
The algorithm detects $k\ge\tau$ by $\bsz\in\partial\Omega_\varepsilon$. To distinguish $k>\tau$ from $k=\tau$ we can adjoin a binary convergence flag to the state space.

For $d>1$, choosing an effective sorting function is less straightforward.  The most successful method to date uses a Hilbert space-filling curve to generate a one dimensional ordering of the walkers at step $k$. This was used by \cite{gerber2015sequential} for a particle filtering algorithm that generalizes Array-RQMC.  We describe Hilbert curves in the next subsection before going on to describe Array-RQMC-WoS. Figure \ref{fig:array-rqmc-chart} illustrates the idea of Array-RQMC in a WoS setting using the Hilbert sort.

\begin{figure}
    \centering
    \includegraphics[width=1.1\linewidth]{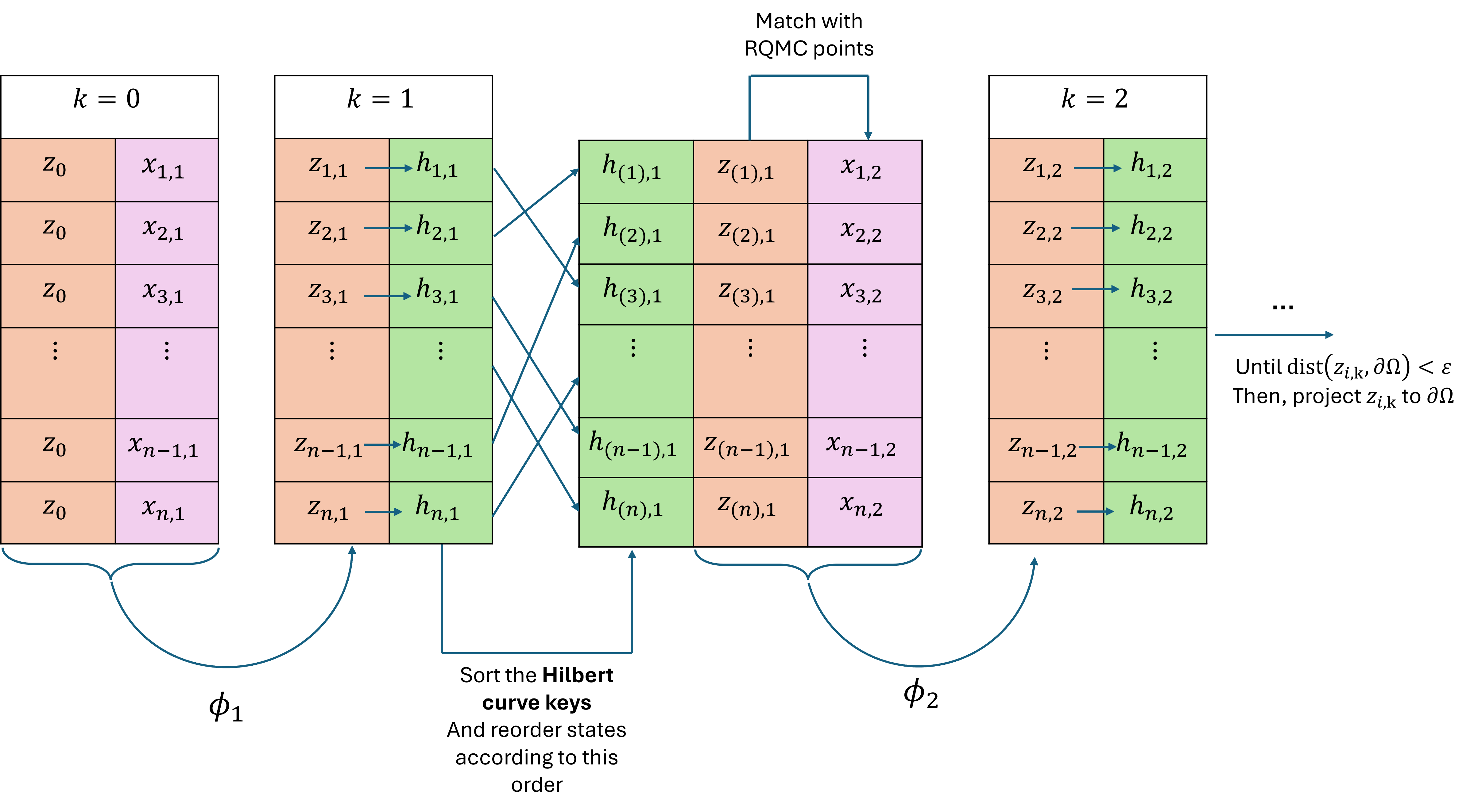}
    \caption{Array-RQMC algorithm up to step $k=2$. At each step, the previous states are mapped through a Hilbert curve to $[0,1)$ and then sorted accordingly, before advancing the chain.
}
    \label{fig:array-rqmc-chart}
\end{figure}

\subsection{Hilbert curves}

The Hilbert curve is a continuous function $H_d:[0,1]\to[0,1]^d$ for $d\ge2$. It is defined as the limit of a sequence of curves, the first four of which are illustrated in Figure~\ref{fig:hilbert}. For background on the construction and properties of Hilbert curves, see Chapter $2$ of \cite{sagan:1994}. For recent work using Hilbert and other space-filling curves in computer graphics see \cite{kell:wach:bind:2022}.
We note some properties of $H_d$ that are given in \cite{he:owen:2015}: 
\begin{compactenum}[\quad\bf1)]
\item If $A\subset[0,1]$ is measurable then $\lambda_d(H_d(A))=\lambda_1(A)$.
\item $H_d$ is surjective but not injective.
\item $\Vert H_d(x)-H_d(\tilde x)\Vert\le 2\sqrt{d+3}|x-\tilde x|^{1/d}$.
\end{compactenum}

We would like our sorting function $h:[0,1]^d\to[0,1]$ to be $H_d^{-1}$. From point 2, there exist distinct $x,\tilde x\in[0,1]$ with $H_d(x)= H_d(\tilde x)$, so $H_d^{-1}$ is not well defined. However, \cite{gerber2015sequential} show that the set of points $\bsz\in[0,1]^d$ with a non-unique pre-image under $H_d$ has measure zero. They also provide a pseudo-inverse $h$ with the property that $H_d(h(\bsz))=\bsz$ for $\bsz\in[0,1]^d$. 

An important reason for using Hilbert curves in this setting is their
locality-preserving behavior from point 3 above: nearby intervals in $[0,1]$ are mapped to
nearby regions of $[0,1]^d$. However, that does not make $h$ locality preserving. While there is a tendency for nearby $\bsz$ and $\tilde\bsz$ to have nearby $h(\bsz)$ and $h(\tilde\bsz)$, the pseudo-inverse $h$ is discontinuous and a region in $[0,1]^d$ can be split into numerous separated subintervals of $[0,1]$.

Because the Hilbert curve is only H\"older $1/d$ in $d$ dimensions, \cite{he:owen:2015} anticipate a diminishing advantage for the method in \cite{gerber2015sequential} as $d$ increases. They also remark that many important problems have dimensions $2$ or $3$ and we have seen that the BVPs motivating WoS often have small $d$.

\begin{figure}[t]
    \centering
    \includegraphics[width=1\linewidth]{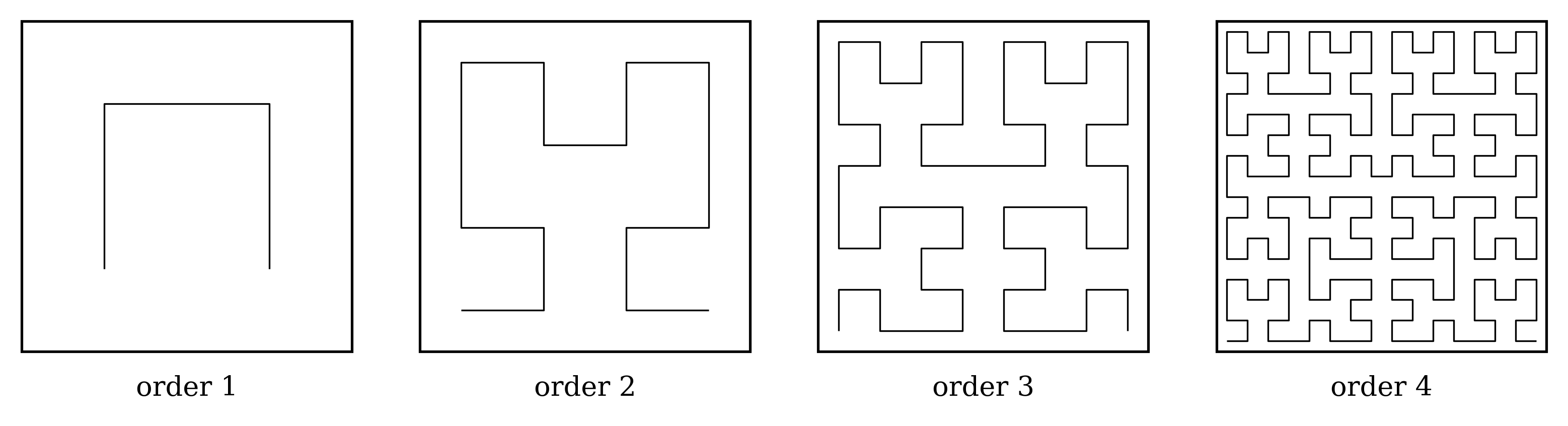}
    \caption{First four approximations of the Hilbert curve in $[0,1]^2$.}
    \label{fig:hilbert}
\end{figure}

\subsection{Array-RQMC-WoS}\label{sec:arrayrqmcwos}

The combination of WoS and Array-RQMC is now straightforward. We start with $\bsz_{i0}=\bsz_0$ for $i=1,\dots,n$. 
Given an RQMC point set $\cx_1\in[0,1)^{n\times s}$ we use the first $s_0$ elements in row $i$ to advance $\bsz_{i0}$ to $\bsz_{i1}$. If there is a source function we use the last $s_1=s-s_0$ elements in row $i$ to sample $\bsw_{i1}$. 

For $k>1$, we sort the points $\bsz_{i,k-1}$ so that the values $h(\bsz_{i,k-1})$ are in increasing order.  Then we use an RQMC point set $\cx_k\in[0,1)^{n\times s}$ to advance the points.  This point set has the last $s$ columns of a larger (implicit) point set that has been permuted if necessary to have its first  column in increasing order as described in \cite{l2018sorting}. The first $s_0$ columns of $\cx_k$ advance the walkers and the last $s_1$ columns are used to evaluate the Green's function. The Array-RQMC-WoS algorithm pseudocode is given in Algorithm \ref{alg:array_rqmc-wos}, for the case where there is no source term. As described in Section~\ref{sec:numerical}, the states are first mapped linearly into $[0,1]^d$ before applying the Hilbert pseudo-inverse. To keep the notation simple in the pseudocode, we suppress this mapping and write $h(\bsz_{ik})$ for the Hilbert key of the transformed state.

\begin{algorithm}  
\caption{Array-RQMC-WoS with Hilbert sorting\label{alg:array_rqmc-wos}}
\begin{algorithmic}[1]

\INPUT
\begin{tabular}[t]{@{}l l@{}}
- & Initial point $\bsz_0\in\Omega$, termination threshold $\varepsilon>0$, number of chains $n$,\\
& maximum number of steps $K$;\\
- & a QMC point set $\widetilde \cx_n\subset[0,1)^{n\times s}$; \\
- & a map $\psi_0:[0,1)^s\to S_d(\bszero,1)$ (uniform direction);\\
- & distance function $r(\bsz)=\dist(\bsz,\partial\Omega)$ and projection $\Pi:\Omega\to\partial\Omega$;\\
- & boundary function $b:\partial\Omega\to\mathbb{R}$;\\
- & sorting function $h:[0,1]^d\to [0,1]$ 
\end{tabular}
\OUTPUT Estimator $\widehat u_n(\bsz_0)$ of $u(\bsz_0)$.

\STATE Set $\bsz_{i0}\leftarrow\bsz_0$ for $i=1,\dots,n$  
\STATE Set $Y_i\leftarrow 0$ for all $i$ \COMMENT{\textit{terminal values}}
\STATE Set $A_i\leftarrow 1$ for all $i$ \COMMENT{\textit{initialize all chains as active}}
\STATE Set $k\leftarrow 1$ \COMMENT{\textit{step counter}} 
\WHILE{$\max_{1\le i\le n} A_i = 1$ and $k\le K$}
    \STATE \COMMENT{\textit{at least one chain is active and the step cap has not been exceeded}}
    \STATE Randomize $\widetilde \cx_n$ afresh into $\cx_{nk}=\{\bsx_{1k},\dots,\bsx_{nk}\}$, independent of previous randomizations.

    \FOR{$i=1,\dots,n$} 
        \IF{$A_i=1$}
            \STATE $r_{ik}\leftarrow r(\bsz_{i,k-1})$
            \STATE $\bsz_{ik}\leftarrow \bsz_{i,k-1}$
            \IF{$r_{ik}\ge \varepsilon$}
                \STATE $\bsz_{ik}\leftarrow \bsz_{i,k-1}+r_{ik}\,\psi_0(\bsx_{ik})$ 
                \COMMENT{\textit{take a WoS step}}
            \ENDIF
        
            \IF{$r_{ik}<\varepsilon$ or $k=K$}
                \STATE $\bar{\bsz}_i \leftarrow \Pi(\bsz_{ik})$; 
                \quad $Y_i \leftarrow b(\bar{\bsz}_i)$; 
                \quad $A_i\leftarrow 0$
                \STATE $\bsz_{ik}\leftarrow \bar{\bsz}_i$
                \STATE $q_{ik}\leftarrow +\infty$ 
                \COMMENT{\textit{push inactive states to the end}}
            \ELSE
                \STATE $q_{ik}\leftarrow h(\bsz_{ik})$ 
                \COMMENT{\textit{compute Hilbert key}}
            \ENDIF
        \ELSE
            \STATE $\bsz_{ik}\leftarrow \bsz_{i,k-1}$; \quad $q_{ik}\leftarrow +\infty$
        \ENDIF
    \ENDFOR

    \STATE Sort the keys $q_{1k},\dots,q_{nk}$ in increasing order, and apply the same permutation to the triples $(\bsz_{ik},A_i,Y_i)$.
    \STATE $k\leftarrow k+1$
\ENDWHILE
\RETURN $\displaystyle \widehat u_n(\bsz_0)\leftarrow \frac{1}{n}\sum_{i=1}^{n} Y_i$
\end{algorithmic}
\end{algorithm}

\subsection{Convergence of the Array-RQMC algorithm}

The best proven rate for the MSE of Array-RQMC is $o(n^{-1})$ when using the Hilbert sort, as shown in Theorem $7$ of \cite{gerber2015sequential}. They study Array-RQMC as a special case of sequential QMC in which certain importance weights have been set to zero. Their method of proof is to show that a low discrepancy property holding at step $k$ implies that property will be preserved into step $k+1$.  This strategy was also used for Array-QMC in dimension $1$ by \cite{leco:tuff:2004}.

Sufficient conditions for a variance of $O(n^{-3/2})$ for Array-RQMC appear in \cite{lecu:leco:tuff:2008} 
for a $d=1$ dimensional walk. 
Their Assumption 2 requires $n$ to be a square and the RQMC point set must have one point independently sampled uniformly within each of $n$ square cells of size $1/\sqrt{n}\times1/\sqrt{n}$. Common RQMC point sets are not quite of this form though some randomized orthogonal arrays \citep{roavar1} have that structure.

The WoS problems do not have $d=1$ because then $\Omega$ would be an interval and simpler algorithms could solve the BVP.  The Hilbert sort projects $\Omega$ into $[0,1]$ allowing us to view $d$-dimensional  points $\bsz_{ik}$ as one dimensional points $h(\bsz_{ik})$. Then, we can use $2$-dimensional point sets (when there is no source function) to advance the walkers just as \cite{leco:tuff:2004} and \cite{l2018sorting} do. Assumption 1 in \cite{lecu:leco:tuff:2008} requires the point $z_{ik}$ to be a monotone function of $x_{ik}\in[0,1)$ for each $z_{i,k-1}$, as occurs when the next state is generated by inversion
from a single uniform random variable. This monotonicity does not hold for a projected walk because $h( \bsz_{i,k-1}+r_{ik}\theta(x))$ is not monotone in $x\in[0,1)$.

Empirical results in \cite{l2018sorting} show much better rates in $n$ than have been proved. In a one dimensional problem, the mean squared discrepancy decayed at rates between $O(n^{-1.50})$ and $O(n^{-1.92})$. The variance of estimates of $\e(Y)$ and $\e(Y^2)$ decayed even faster, and in an Asian option problem using the Hilbert sort, the observed log-log slopes ranged from $-1.54$ to $-2.03$.

When there is a source function we could consider the state space to be $\Omega^2$ with entries $(\bsz_k,\bsw_{k+1})$.  That would require using a Hilbert sort in $[0,1]^{2d}$ which is only H\"older $1/(2d)$, so we have only used $\bsz_k$ in our Hilbert sort.

\section{Numerical experiments}\label{sec:numerical}

All of our experiments have bounded $\Omega$ that we enclose in a hyper-rectangular region which is then mapped linearly to the domain $[0,1]^d$ of the Hilbert sorting function.  Four of the examples have $\Omega\subset[-1,1]^d$, and for them, this mapping is $\eta(\bsz)=(\bsz+1)/2$ componentwise. Our number $n$ of walkers is always a power of two, ranging from $4$ to $2^{17}=131{,}072$.

For our examples, we graph the sampling variance versus $n$. When the true solution is known, we also present an MSE versus $n$.  When plotting these values we attach reference lines.
Those are always fit by a regression of log variance (or MSE) on $\log(n)$ for sample sizes $n\ge 2^7$, but graphed for all sample sizes.
If we do not specify whether the slopes are for lattice or Sobol' sampling, then data for both of those methods has been pooled.  

In \cite{rqmcwos:tr} the RQMC-WoS variances closely followed reference lines and we estimated variance reduction factors from those regressions. In this paper, we see and interpret departures of Array-RQMC-WoS from the reference lines and report estimated variance reduction factors from replicates at specific values of $n$. To get comparable numbers, we do the same for RQMC-WoS in this paper, obtaining very slightly different variance reductions from the regression-based ones in \cite{rqmcwos:tr}.

\subsection{Gasket example}\label{sec:gasket}

This example comes from \cite{wosoneweekend}.  The function $u$ gives the spatially varying temperature of a cylinder head gasket. The gasket has 50 holes in it corresponding to four cylinder bores and some other holes for coolant, oil and oil return. The boundary $\partial\Omega$ includes the exteriors of these holes as well as the outer boundary of the gasket itself. The function $u$ is harmonic, so $\Delta u = 0$ inside $\Omega$. The boundary value function is
\begin{equation}\label{eq:gasket}
b(\bsz) = \sum_{r\in\{
\mathrm{coolant},\ 
\mathrm{oil\ return},\ 
\mathrm{outer},\ 
\mathrm{oil},\ 
\mathrm{bore}\}}
T_r\,\bsone_r(\bsz)
\quad\text{for $\bsz\in\partial\Omega$}
\end{equation}
where they give example temperatures $T_r$ of 90, 110, 120, 130 and 160 degrees Celsius for the boundary elements in the order listed above.
As in~\cite{rqmcwos:tr}, we consider estimation of $u(\bsz_*)$ for $\bsz_*=(0.240999,0.3)$, which is just above the center of the third borehole and is nearly equidistant from several  curves with different temperatures.

We ran the MC-WoS, RQMC-WoS and Array-RQMC-WoS algorithms to estimate $u(\bsz_*)$ for the gasket with
$100$ independent replicates of each method, with $\varepsilon=10^{-3}$. 
The variance curves for these three methods are shown in Figure~\ref{fig:var-gasket}. The apparent convergence rate for RQMC-WoS is $O(n^{-1.09})$ and for Array-RQMC-WoS it is $O(n^{-1.62})$.  There is not much difference between methods using Sobol' points or lattices and so we have pooled their data to estimate rates. The RQMC-WoS variances follow their reference curve closely for both large and small $n$. The Array-RQMC-WoS variances for small $n$ lie below the reference line fit to $n\ge 128$. It appears that their slope has improved as $n$ has increased.  

Variance reduction factors for this problem are given in Table~\ref{tab:mserf-summaries} for sample sizes $n=2^{12}$ and $2^{17}$, respectively. The reductions are roughly 24:34-fold at $n=2^{12}$ and 200:318-fold at $n=2^{17}$ with the higher values being for Array-Lattice-WoS and lower values for Array-Sobol-WoS. Accuracy for our other examples is also reported in those tables. 


\begin{figure}
    \centering    \includegraphics[width=0.8\linewidth]{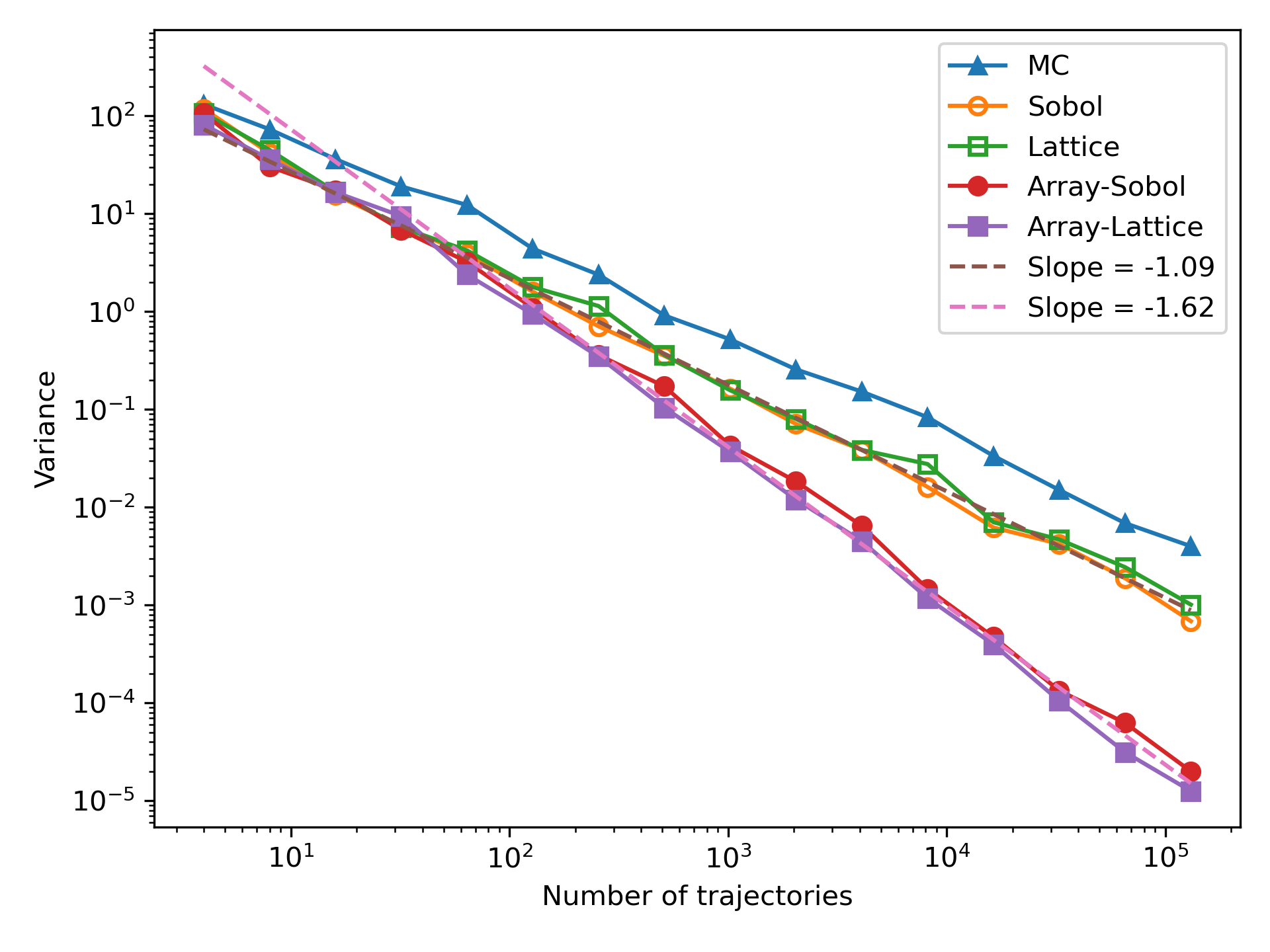}
    \caption{Variance of the Array-RQMC-WoS, RQMC-WoS and MC-WoS estimators, for the gasket example at $\bsz_0=\bsz_{*}$. The RQMC points used are scrambled Sobol' points and shifted lattices. Reference lines are as described in Section~\ref{sec:numerical}.}
    \label{fig:var-gasket}
\end{figure}

\subsection{Unit disk example}\label{sec:unitdisk}
This example comes from \cite{masc:hwan:2003} and \cite{masc:kara:hwan:2004} who describe it as computing an electrostatic potential.
The domain is $\Omega = B_2(\bszero,1)$.
The BVP has $\Delta u=0$ in $\Omega$ and
\begin{equation}\label{eq:unit-disk}
    u(\bsz) = \frac{1}{2}\ln[(z_1-2)^2+z_2^2],\quad\text{for  $\bsz \in \partial\Omega$.}
\end{equation}
The formula in~\eqref{eq:unit-disk} is also the solution inside $\Omega$, so this example allows one to compute an MSE, not just a variance.

The MSE curves are shown in Figure~\ref{fig:mse-disk}, for $\bsz_0=(0,0.5)$ and $\varepsilon=10^{-4}$. The RQMC-WoS and Array-RQMC-WoS estimators have MSEs close to their reference curves of $O(n^{-1.14})$ and $O(n^{-1.78})$, respectively. Once again the Array-RQMC-WoS MSEs follow a better slope for large $n$ than for small $n$.

\begin{figure}
    \centering    \includegraphics[width=0.8\linewidth]{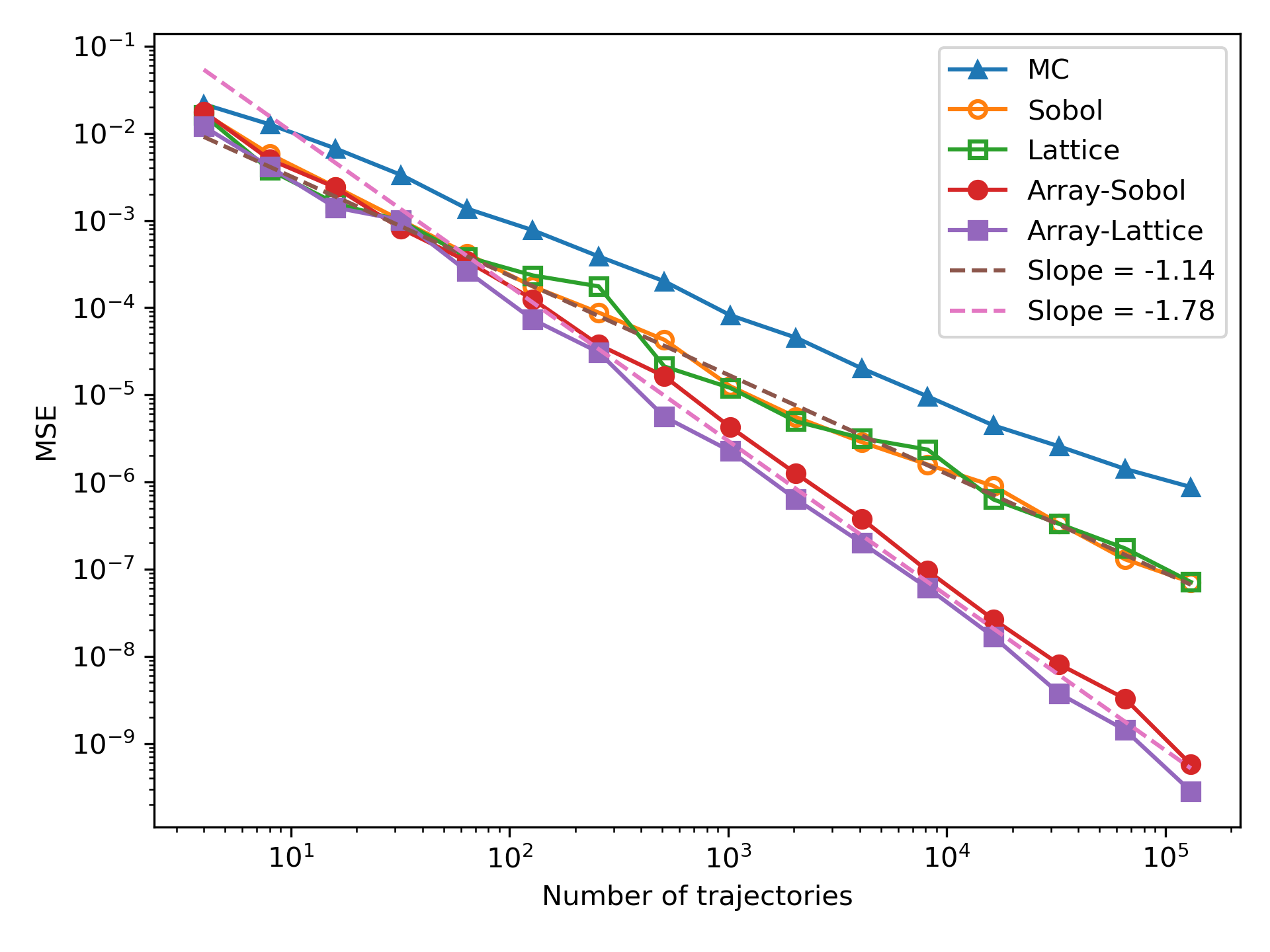}
    \caption{MSE of the Array-RQMC-WoS, RQMC-WoS and MC-WoS estimators for the unit disk example. The RQMC curves have Sobol' and lattice versions.}
    \label{fig:mse-disk}
\end{figure}

\subsection{Pac-Man example}
This example comes from \cite{dela:rome:1990} who do not give a physical motivation.
The domain, in polar coordinates, is
$$\Omega=\{(r,\theta)\mid 0\le r\le 1, -3\pi/2\le\theta\le 0\}.$$
Equivalently, $\Omega$ is the unit disk with the first quadrant removed, which resembles a rotation of the 1980s video game character Pac-Man. This domain has a re-entrant corner at the origin.  The function has a source term 
\begin{align}\label{eq:pacmansource}
\Delta u(r,\theta)=g(r,\theta)=-(2-r^2)e^{-r^2/2}\quad\text{for $(r,\theta) \in \Omega$.}
\end{align} 
The boundary conditions are
$b(r,0)=e^{-r^2/2}$,
$b(r,-3\pi/2)=-r^{1/3}+e^{-r^2/2}$ and
$b(1,\theta)= \sin(\theta/3)+e^{-1/2}$. The solution is known to be
$u(r,\theta)=r^{1/3}\sin(\theta/3)+e^{-r^2/2}$ in all of $\Omega$.


Figures \ref{fig:pacman-variance} and \ref{fig:pacman-mse} show the variance and MSE respectively. We see that Array-RQMC-WoS with the Korobov lattices has some poor behavior at sample sizes $2^6$, $2^{11}$ and $2^{15}$. We did not see that in other examples. We thought it might be due to the nonzero source term. To test this, we also used the same QMCPy lattices that we had used in \cite{rqmcwos:tr}. As documented there, those lattices, due to F.\ Kuo, are designed for $2^{10}\le n\le 2^{20}$ but also worked well on some smaller sample sizes in our examples. Figures~\ref{fig:pacman-variance} and~\ref{fig:pacman-mse} show no anomalies with that lattice. Non-smooth progress for lattices is not often remarked upon but was seen, for example, in \cite{lecu:mung:2012}. In most of our examples, the Korobov lattices gave better results but in this one the Kuo lattices were better.

The MSE for Array-RQMC-WoS starts to go above the reference curve at the largest sample sizes but the variance does not. That implies that the bias in Array-RQMC-WoS is no longer a negligible part of the error. In this example, the standard deviation goes below $10^{-4}$ when estimating the solution at $\bsz_0=(r_0 \cos(\theta_0),r_0 \sin(\theta_0))$ with $(r_0,\theta_0)=(0.1244,-0.7906)$, with $\varepsilon=10^{-4}$. This is one of the examples where \cite{masc:hwan:2003} found the bias to be~$O(\varepsilon)$.

\begin{figure}
    \centering

    \begin{subfigure}{0.75\linewidth}
        \centering
        \includegraphics[width=\linewidth]{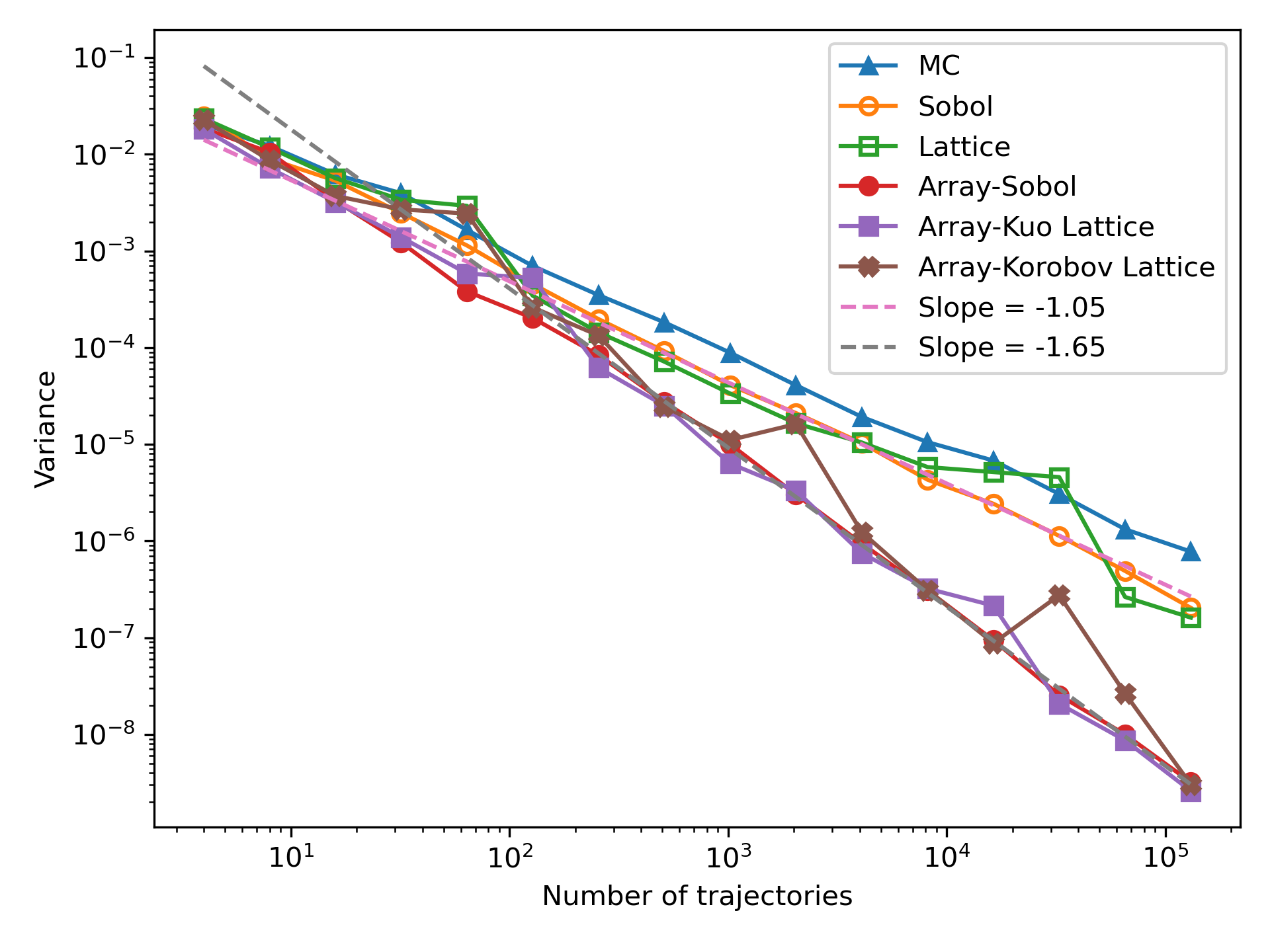}
        \caption{Variance for the Pac-Man example.}
        \label{fig:pacman-variance}
    \end{subfigure}

    \vspace{0.75em}

    \begin{subfigure}{0.75\linewidth}
        \centering
        \includegraphics[width=\linewidth]{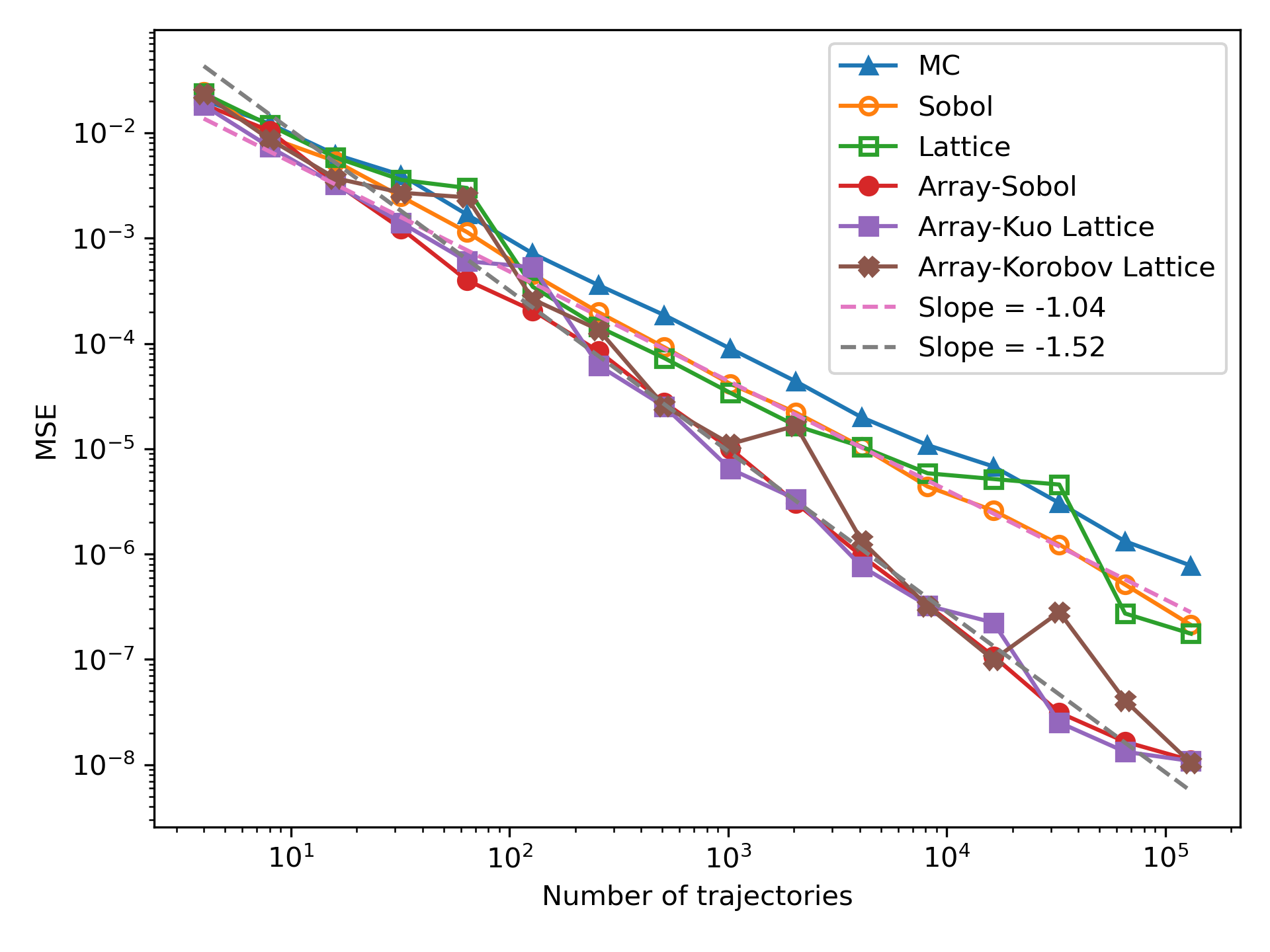}
        \caption{MSE for the Pac-Man example.}
        \label{fig:pacman-mse}
    \end{subfigure}

    \caption{Variance and MSE of the Array-RQMC-WoS, RQMC-WoS, and MC-WoS estimators for the Pac-Man example. The dashed reference curves show pooled fitted reference slopes. }
    \label{fig:var-mse-pacman}
\end{figure}

\subsection{Dumbbell example}\label{sec:dumbbell}

We considered another example with a nonzero source term taken from 
\cite{lund:rama:2019} and \cite{magn:pogg:2022}. They investigate, respectively, the number of critical points of $u$ and their distance to the boundary of their domain $\Omega$.

Their boundary value problem is
\begin{equation}\label{eq:dumbbell}
\Delta u(\bsz)=-2,\quad \bsz\in\Omega,
\qquad
u(\bsz)=0,\quad \bsz\in\partial\Omega .
\end{equation}
Here
$$
\Omega= B_2((-L,0),R)\cup([-L,L]\times[-w,w])\cup B_2((L,0),R),
$$
so that $\Omega$ consists of two disks of radius $R<L$ connected by a thin rectangular bridge of length $2L$ and width $2w$. We evaluate the solution at $\bsz_0=(L-R,0)$.
As discussed in \cite{magn:pogg:2022}, this problem can be interpreted physically as describing the velocity profile of a fluid flowing through a pipe whose cross-section is $\Omega$.

For this particular problem, the constant source term simplifies the WoS estimator. Rather than sampling an interior point inside a disk at each step, one only needs to add the contribution associated with the squared radius of the ball at each step, as described in \cite{rqmcwos:tr}. Thus,

$$\hat{u}(\bsz_0)=\frac{1}{2n}\sum_{i=1}^n\sum_{k=1}^{\tau_{i}}
\dist(\bsz_{i,k-1},\partial\Omega)^2
$$
where $\tau_i = \tau_{i,\varepsilon}=\min\{k\in\natu_0\mid \dist(\bsz_{ik},\partial\Omega)<\varepsilon\}$.
Results using RQMC-WoS, Array-RQMC-WoS and MC-WoS are shown in Figure~\ref{fig:dumbbell-array}.

\begin{figure}
    \centering
    \includegraphics[width=0.8\linewidth]{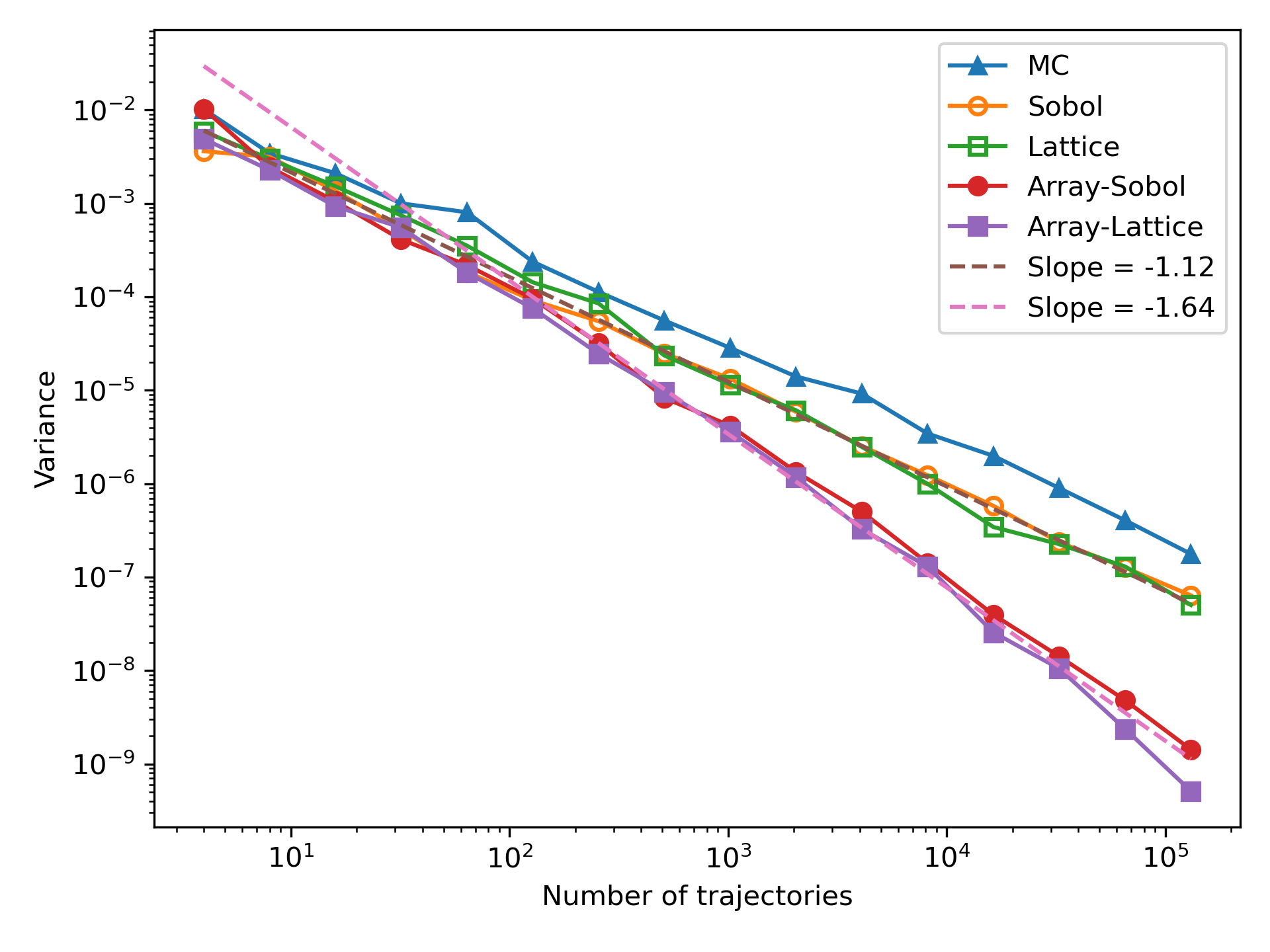}
    \caption{Variance for the dumbbell example, using Array-RQMC-WoS, RQMC-WoS and MC-WoS. In this example, $L=1.5$, $R=1.0$, $w=0.4$ and $\varepsilon=10^{-4}$.}
    \label{fig:dumbbell-array}
\end{figure}

\subsection{Unit ball example}\label{sec:unitball}
This example from \cite{masc:hwan:2003} has a three dimensional domain, $\Omega=B_3(\bszero,1)$. Therefore it takes $2$ uniform variables to generate $\bsz_k$ from $\bsz_{k-1}$. In Array-RQMC, we advance the walkers using the last two columns of a three dimensional RQMC point set whose first column we align with the Hilbert projections of $\bsz_{k-1}$.

This example has $\Delta u(\bsz)=0$ for $\bsz\in \Omega$. The boundary condition is
$$u(\bsz)=[(z_1-2)^2+z_2^2+z_3^2]^{-1/2}$$ for $\Vert\bsz\Vert=1$
which is also the solution inside $\Omega$.

Figure \ref{fig:mse-sphere} shows the MSE versus $n$ for estimating $u(\bsz_0)$ at $\bsz_0=(0.2, 0.3, -0.1)$ with $\varepsilon=10^{-4}$. The RQMC-WoS and Array-RQMC-WoS reference curves are $O(n^{-1.15})$ and $O(n^{-1.42})$, respectively.  The MSEs do not curve upwards near the end. This is consistent with the bias being negligible for $n\le2^{17}$, unlike the Pac-Man case where we saw evidence of bias.  The MSE for Array-RQMC-WoS lies below the reference line for small $n$. This is consistent with a slowly improving convergence rate for Array-RQMC-WoS as $n$ increases.  

\begin{figure}
    \centering
    \includegraphics[width=0.8\linewidth]{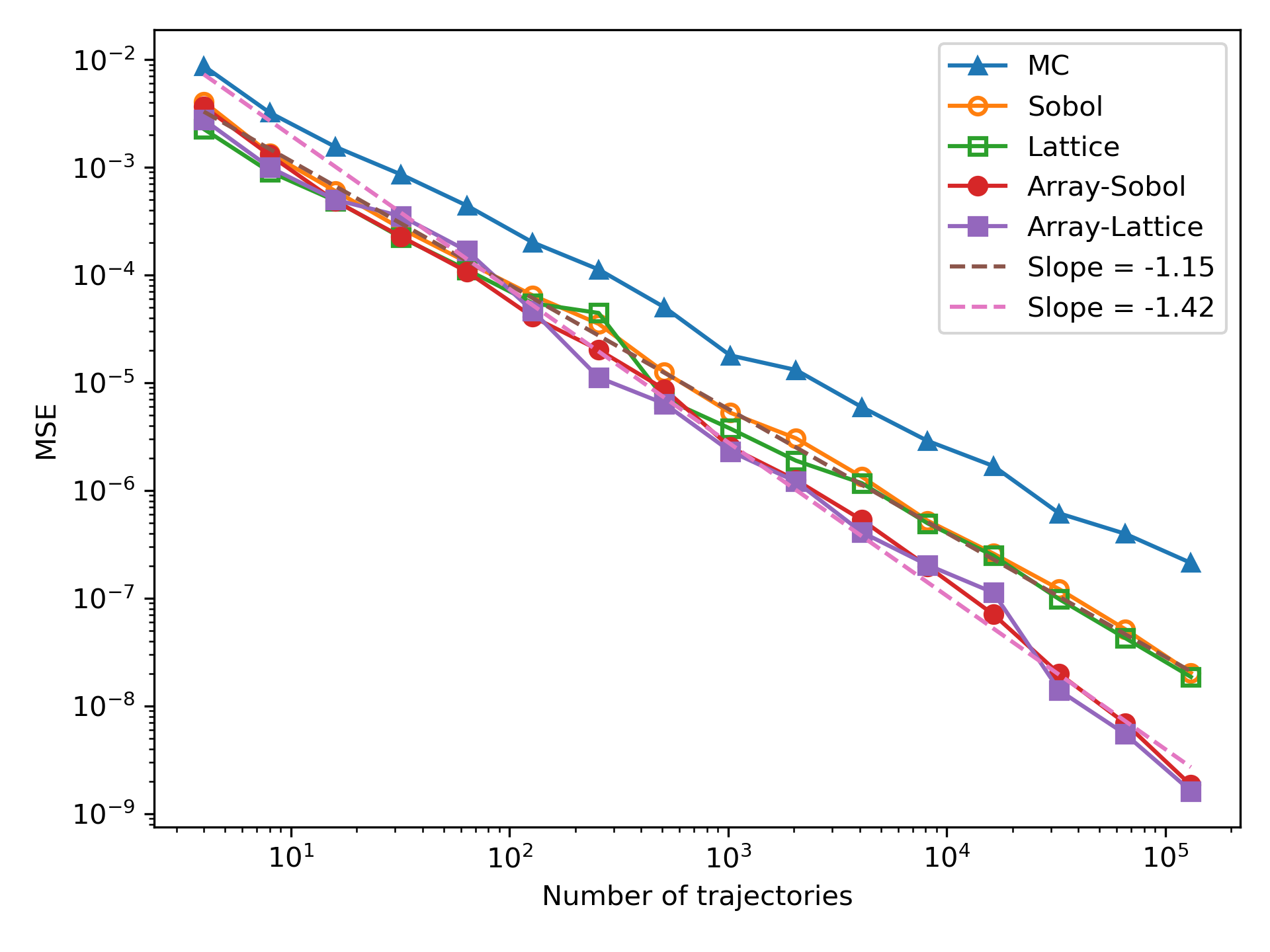}
    \caption{
    MSE of MC-WoS, RQMC-WoS and Array-RQMC-WoS estimators for the unit ball example.}
    \label{fig:mse-sphere}
\end{figure}

\subsection{Summary of examples}\label{sec:summary}

We show MSE reduction factors for the unit disk, unit ball and Pac-Man examples as well as variance reduction factors for the gasket and dumbbell examples, when using Sobol and Lattice-WoS, Sobol and Lattice Array-WoS, compared to MC-WoS, for sample sizes $n=2^{12}$ and $n=2^{17}$ in Table \ref{tab:mserf-summaries}. For the Pac-Man example, the Array-Lattice-WoS entries in Table \ref{tab:mserf-summaries} use the Kuo lattice rule, while the other Array-Lattice-WoS entries use the Korobov lattice rules described in Section \ref{sec:sampling}. We see that Array-Lattice-WoS is usually better than Array-Sobol-WoS although that improvement is by a modest factor.

There is very little theory to explain why Array-RQMC-WoS works so well, much less why it works differently in different problems.  We can mention some qualitative features of the problems. The best improvements we see are for the unit disk problem. It has a simple domain, a smooth boundary value function and no source term. The gasket problem has a disconnected boundary that adds discontinuities to the integrand and it is reasonable to expect that to be adverse to the MSE ratio. The unit ball example is also very smooth with a simple geometry but every step uses two uniform inputs instead of one and the mapping $\psi_{0,3}$ from the square to the sphere is not smooth. The Pac-Man problem has a source term which then consumes two more uniform inputs per step and it has a re-entrant corner. 
\begin{table}[t]
\centering
\small
\setlength{\tabcolsep}{4pt}
\begin{tabular}{lcccc}
\toprule
$n=2^{12}$ 
& Sobol-WoS
& Lattice-WoS
& Array-Sobol-WoS
& Array-Lattice-WoS \\
\midrule
Unit disk
& $\phz7.0$
& $\phz6.3$
& $53.2$
& $100.7$\\
Gasket
& $\phz3.9$
& $\phz3.9$
& $23.5$
& $\phz33.7$\\ 
Unit ball
& $\phz4.4$
& $\phz5.1$
& $11.2$
& $\phz14.5$\\
Pac-Man
& $\phz1.9$
& $\phz1.9$
& $20.7$
& $\phz26.1$\\
Dumbbell
&$\phz3.6$
&$\phz3.7$
&$18.3$
&$\phz28.2$\\
\bottomrule\\[1ex]
\toprule
$n=2^{17}$
& Sobol-WoS
& Lattice-WoS
& Array-Sobol-WoS
& Array-Lattice-WoS \\
\midrule
Unit disk
& $12.3$
& $12.3$
& $1519.1$
& $3086.8$\\
Gasket
& $\phz5.8$
& $\phz4.0$
& $\phz200.0$
& $\phz 318.2$\\
Unit ball
& $10.5$
& $11.4$
& $\phz115.2$
& $\phz 131.0$\\
Pac-Man
& $\phz3.7$
& $\phz4.4$
& $\phz\phz70.7$
& $\phz\phz71.6$ \\ 
Dumbbell
&$\phz2.8$
&$\phz3.5$
&$\phz124.0$
&$\phz 345.3$\\
\bottomrule
\end{tabular}
\caption{Estimated MSE reduction factors at $n=2^{12}$ and $n=2^{17}$, versus MC-WoS, over
$100$ replicates. 
For the gasket and dumbbell examples, we report the variance reduction factor instead.
}
\label{tab:mserf-summaries}
\end{table}

\section{Algorithmic variants}\label{sec:variants}
In our examples, Array-RQMC-WoS regularly outperformed RQMC-WoS by a large factor and RQMC-WoS in turn consistently outperformed MC-WoS, but by a much smaller factor. We explored some variations on Array-RQMC-WoS to see if any further large improvement factors could be obtained. Section~\ref{sec:fibolattice} considers replacing Korobov lattices by Fibonacci lattices. Section~\ref{sec:converged} considers alternatives to the move-to-the-end strategy for handling converged points. Section~\ref{sec:large-qmc} considers using a single high-dimensional RQMC point set, instead of a fresh lower-dimensional RQMC point set at each step. Section~\ref{sec:fixedk} considers running Array-RQMC-WoS for a fixed number of steps instead of using a fixed stopping distance $\varepsilon$.

\subsection{Fibonacci lattices}\label{sec:fibolattice}

In our examples, the Korobov lattices are constructed using sample sizes that are powers of two. That makes it easy to compare them to scrambled Sobol' points.  However, when $d=2$ and there is no source function, the Fibonacci lattices are available and those are known to be especially good choices. For one such result, see \cite{bily:teml:vald:yu:2012} who consider $L_2$ discrepancy. 

The Fibonacci lattice has
\begin{align}\label{eq:fibo}
\bsx_i = \biggl( \frac{i-1}n, \biggl\{\frac{(i-1)F_{r-1}}{n}\biggr\}\biggr)
\end{align}
for $i=1,\dots,n=F_r$ where $F_r$ is the $r$th Fibonacci number.  As usual for lattices, $\{v\}=v-\lfloor v\rfloor$ denotes the fractional part of $v\in\real$ and context will distinguish that usage from the set containing $v$.

Step $k$ of Array-RQMC-WoS only uses the first $m_k\le n$ points of the sequence so we might expect Fibonacci lattices to have no particular advantage especially in the later steps.  On the other hand, the early steps with $n$ close to $m_k$ are expected to be the most important ones, so perhaps a Fibonacci lattice will be better.

We made one comparison of Fibonacci lattices with Korobov lattices. This was for the unit disk integrand starting at $\bsz_0 =(0,0.5)$ using $100$ replicates.  The results in Figure~\ref{fig:fibo} show essentially equivalent performance for Korobov lattices and Fibonacci lattices and we did not think that the issue needed more investigation, so we retain the sample sizes $n=2^m$ as they allow ready comparison with Sobol' points.

\begin{figure}
    \centering
   \includegraphics[width=0.8\linewidth]{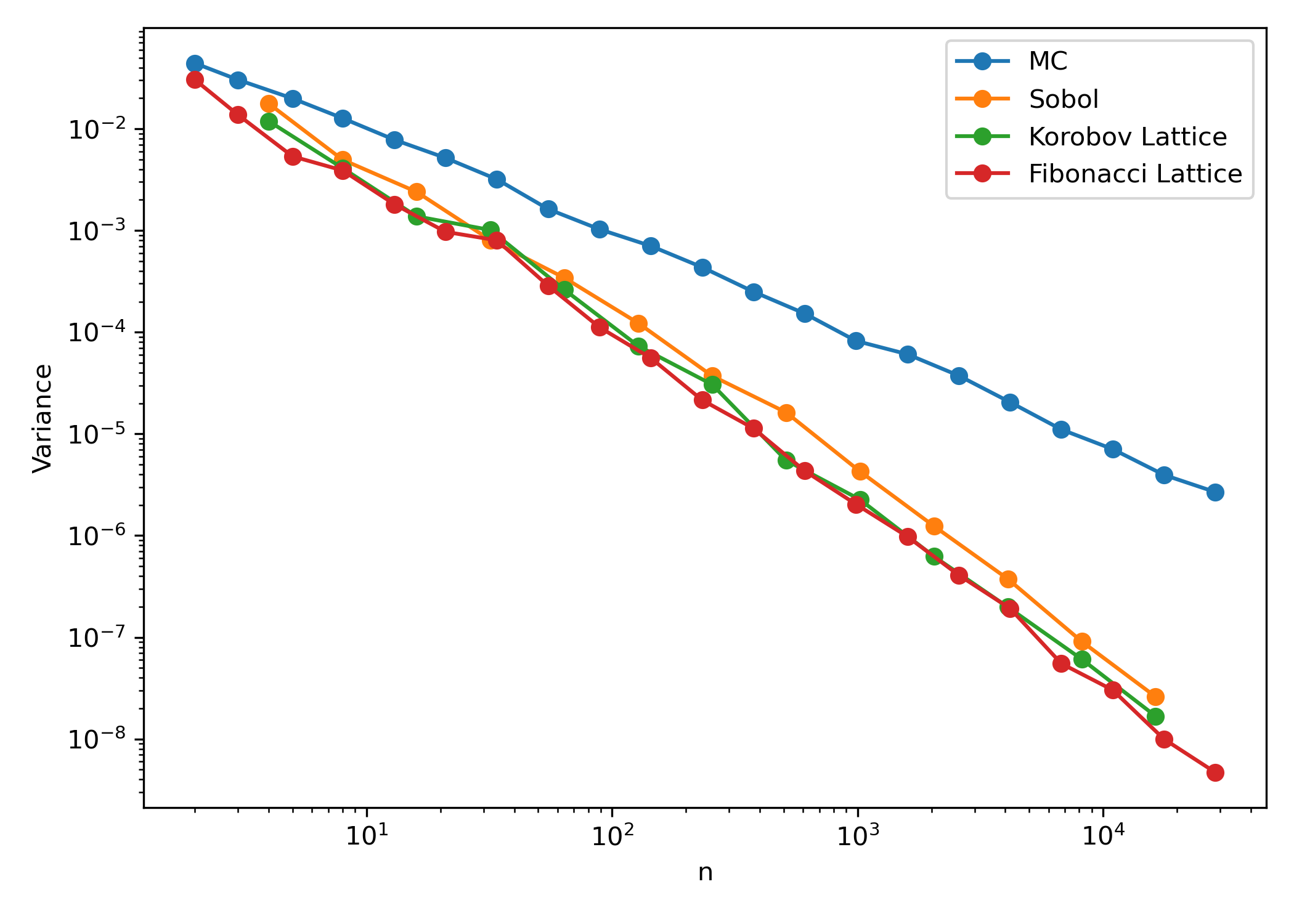}
    \caption{Variance curves for the Array-RQMC-WoS estimator using scrambled Sobol' points,  Korobov lattices and Fibonacci lattices, for the unit disk example at $\bsz_0=(0,0.5)$.}
    \label{fig:fibo}
\end{figure}

\subsection{Converged points in Array-RQMC-WoS}\label{sec:converged}

At step $k$ of the algorithm, we must sample $n$ values for $\bsz_{1k},\dots,\bsz_{nk}$ starting at points $\bsz_{i,k-1}$.  A complication is that some of the $\bsz_{i,k-1}$ have already entered $\partial\Omega_\varepsilon$.  If $m_k$ points have not converged, then we only need $m_k$ of the $n$ RQMC points to advance the trajectories.  The strategy in \cite{l2018sorting} is to move the Hilbert key of the converged points to $+\infty$. Then for the $m_k$ remaining active walks, they use the first $m_k$ RQMC inputs of a fresh $n$-RQMC point set to advance to the next points.

The first $m_k$ of $n$ points will ordinarily not have as small a discrepancy as one could get with a customized $m_k$-point rule. For example $n$ might be a power of $2$ or a prime number where we get quite good discrepancy while $m_k$ will not generally be special. We might expect the discrepancy to be higher by $O(\log(m_k))$ because that is the factor between rates for low discrepancy sequences and low discrepancy point sets. Therefore we explored ways to construct $m_k$ point rules ``on the fly''.  When the squared error is $o(1/n^2)$, then not using special sample sizes can even worsen the rate of convergence \citep{firstsobol}, but we have not seen Array-RQMC-WoS attain errors that small.

First, we considered using all $n$ points to generate WoS movement directions while keeping the converged points where they were. That is equivalent to moving them a distance of zero in their assigned directions.  Those points also do not get new source function contributions.  In this strategy, the inactive walks retain their positions on the Hilbert curve with sequences of inactive walks interleaved between sequences of active walks. We tried the interleaving strategy for the gasket example of Section \ref{sec:gasket}  as well as the unit disk of Section~\ref{sec:unitdisk} and the unit ball of Section~\ref{sec:unitball}.  The results were very nearly equivalent to the original strategy, with no important advantage or disadvantage.

Another strategy we tried was to use Halton point sets of size $m_k$ \citep{halt:1960}. If we attach a first column of values $(i-1)/m_k$ to them, we get Hammersley points \citep{hamm:1960}.  For $d=2$ and no source term, we only need one base $2$ column of Halton points. For $d=3$ and no source term, we use bases $2$ and $3$. For $d=2$, we found this to be slightly worse than the interleaving and move-to-the-end methods. For the unit ball example, we saw nearly equivalent performance to the original method using the Hammersley strategy. 

As noted above, when $d=2$ and there is no source term, we can use randomly shifted Fibonacci lattices. Instead of using the same lattice size at every step, we can find the smallest Fibonacci number $F_r \geq m_k$.  Then, we use the first $m_k$ points of the lattice given by \eqref{eq:fibo}. For the gasket example, using Fibonacci lattices together with the move-to-the-end strategy gives accuracy comparable to the corresponding move-to-the-end method with Korobov lattices, as shown in Figure~\ref{fig:variants-gasket-cst}.

Finally, for $d=2$, we tested stratified point sets of size $m_k$ at each step $k$. Specifically, we used
$$x_{ik}=\frac{(a_k (i-1) \bmod m_k)+\Delta_k}{m_k}, \qquad i=1,\dots,m_k $$ where $a_k$ is coprime with $m_k$ and $\Delta_k\sim \runif(0,1)$ is a single random shift applied to all points in the RQMC point set at each step.  This construction places exactly one point in each interval $[j/m_k,(j+1)/m_k)$, $j=0,\dots,m_k-1$ and applies a shared shift within each bin. In our experiments, we chose $a_k$ by starting from $a_{k,0}=\max\left(\big\lfloor m_k/\varphi \big\rceil \bmod m_k, 2\right)$ where $\varphi$ is the golden ratio, and incrementing $a_k$ (wrapping back to $2$ if $a_k\geq m_k$) until $\gcd(a_k,m_k)=1$. Here $\lfloor x\rceil$ denotes the nearest integer to $x$. The use of the golden ratio scaling provides a simple rule of thumb that avoids very small multipliers, which can yield point sets concentrated on only a few diagonal strips.

\begin{figure}
    \centering
    \includegraphics[width=0.8\linewidth]{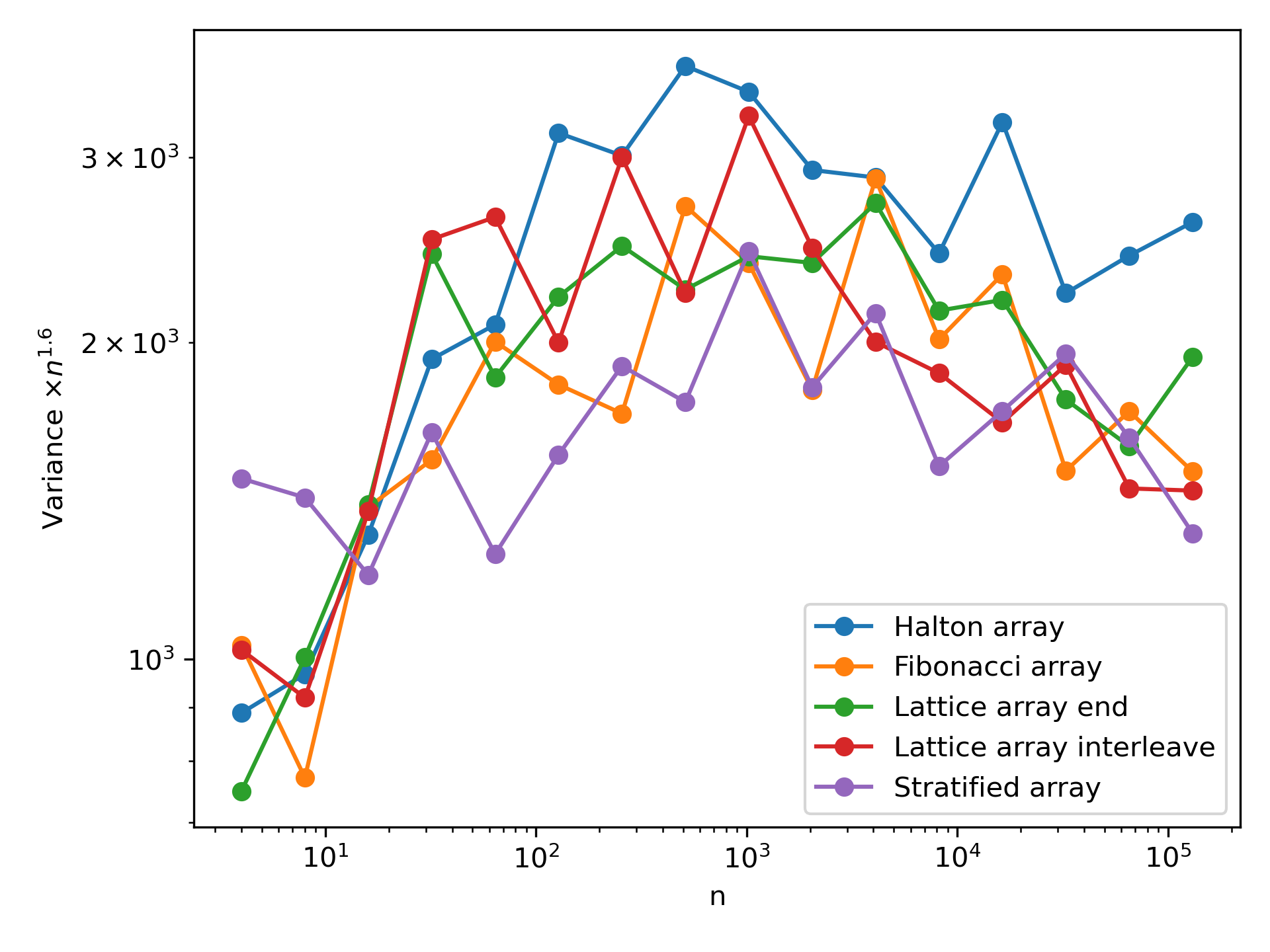}
    \caption{Variance curves for some variants of Array-RQMC-WoS discussed in Section \ref{sec:converged} for the gasket example, multiplied by $n^{1.6}$. 
}
    \label{fig:variants-gasket-cst}
\end{figure}

All of the variants we considered had variances just slightly better than $O(n^{-1.6})$, for the gasket example.  Figure~\ref{fig:variants-gasket-cst} shows a graphical comparison.  The variances are within a factor of $3$  of each other for $4\le n\le 2^{17}$. One of them might have a systematic advantage but it is likely to be negligible compared to the differences between MC-WoS, RQMC-WoS and Array-RQMC-WoS.

\subsection{Randomizing one large QMC point set}\label{sec:large-qmc}

Array-RQMC uses repeated randomizations of the same QMC point set in $[0,1)^{n\times s}$ at each step $k=1,\dots,K$. When we combine two randomizations of an $s$-dimensional QMC point set, the resulting $2s$-dimensional point set does not have low discrepancy. That discrepancy does not converge to $0$ as $n\to\infty$ for either repeated scrambles of a digital net or repeated random shifts of a lattice. However, the stepwise argument in \cite{gerber2015sequential} does not require that joint discrepancy to converge to zero.


We replaced the $K$ independent randomizations of an $s$-dimensional point set with an $sK$-dimensional RQMC point set using columns $(k-1)s+1$ through $ks$ to take step $k$ to test whether it would bring an improvement. The numerical results were not materially different from Array-RQMC-WoS.  A potential disadvantage of using an $sK$-dimensional RQMC point set is that very high dimensional point sets can have worse equidistribution.  Our examples did not use a large $K$ and $s$ was always small.

\subsection{Fixed step algorithms}\label{sec:fixedk}

The number $\tau$ of steps to convergence is known to scale as $\log(1/\varepsilon)$ \citep{bind:brav:2012}. If one simply runs the algorithm to a large value of $k$, the great majority of points $\bsz_{ik}$ will be close to $\partial\Omega$ although some may be far from the boundary. A fixed $k$ algorithm was considered in \cite{wu:etal:2025}. \cite{rqmcwos:tr} studied a fixed $k$ algorithm because, for planar domains, $\bsx \mapsto \bsz_k(\bsx)$ is periodic and Lipschitz, and fixing $k$ removes the discontinuity due to the stopping rule.  
In the end, it did not give lattice methods an advantage over digital nets. We investigated a $k=20$ step version of Array-RQMC-WoS for the unit disk problem and the gasket problem.  We did not see a meaningful difference between fixed $k$ and fixed $\varepsilon$ stopping rules, in terms of variance.

\section{Discrepancy and mean dimension explanations}\label{sec:explanations}

We would like to understand why Array-RQMC-WoS is so much more effective than RQMC-WoS. For $\Omega=B_2(\bszero,1)$, we know the proper distribution for $\bsz_\tau$ and so we can compute the discrepancy of sample points from MC-WoS, RQMC-WoS and Array-RQMC-WoS with respect to that distribution.

Next, we define a notion of mean dimension suitable for Array-RQMC and see that Array-RQMC-WoS is better than RQMC-WoS at exploiting low mean dimension in the gasket example. Intuitively, the methods correctly integrate part of the integrand leaving an error from the remainder.  If the mean dimension of the remainder is high that is a sign that the algorithm has done a better job at integrating low dimensional structure.

\subsection{Discrepancy under MC, RQMC and Array-RQMC}\label{sec:discrepancy}

Array-RQMC-WoS brings together two innovations: the use of RQMC and the use of array sampling. We would like to measure the contributions of those two innovations separately. For the case where $\Omega$ is the unit disk $B_2(\bszero,1)$, we know that
$u(\bsz) = \int_0^1 p_{\bsz}(x) b(\theta(x))\rd x$
where $\theta(x) = (\cos(2\pi x),\sin(2\pi x))^\tran$ and
\begin{align}\label{eq:poisson}
p_{\bsz}(x)=\frac{1-\Vert\bsz\Vert^2}{\Vert\bsz - \theta(x)\Vert^2} \end{align}
is the Poisson kernel. 
A good sampling algorithm will terminate at a set of $n$ points $\bsz_i$ whose angle in polar coordinates, divided by $2\pi$, has an empirical distribution $\hat p_{\bsz}$ close to $p_{\bsz}$.

For $n = 2^m$ with $2\le m\le 14$ and $\bsz=(t,0)^\tran$ for $t\in\{1/3,1/2,3/4,9/10\}$ we computed the Kolmogorov-Smirnov (KS) distance between $p_{\bsz}$ and $\hat p_{\bsz}$ $100$ times. The KS distance in one dimension is the same as the star discrepancy, so the best possible value from $n$ points is $1/(2n)$.  For a plain Monte Carlo sample from $p_{\bsz}$, the KS distance has expected value $\sqrt{\pi/2}\log(2)/\sqrt{n}$ \citep{mars:etal:2003}. Figure \ref{fig:ks-distance} shows the results with reference curves for plain MC sampling and for the best KS distance. Perhaps surprisingly, the discrepancy for MC-WoS is usually very close to what we would have gotten from IID sampling from $p_{\bsz}$. There are some exceptions for the largest $t$ and largest $n$ where MC-WoS underperforms that benchmark.  RQMC-WoS reduces the discrepancy compared to MC-WoS and Array-RQMC-WoS reduces it further. None of the methods attain the optimal rate. We also see that for larger values of $t$, KS distances are higher.  The discrepancy advantage of Array-RQMC versus RQMC is not present for the smallest sample sizes although both outperform MC there.

\begin{figure}[t]
    \centering
    \includegraphics[width=1\linewidth]{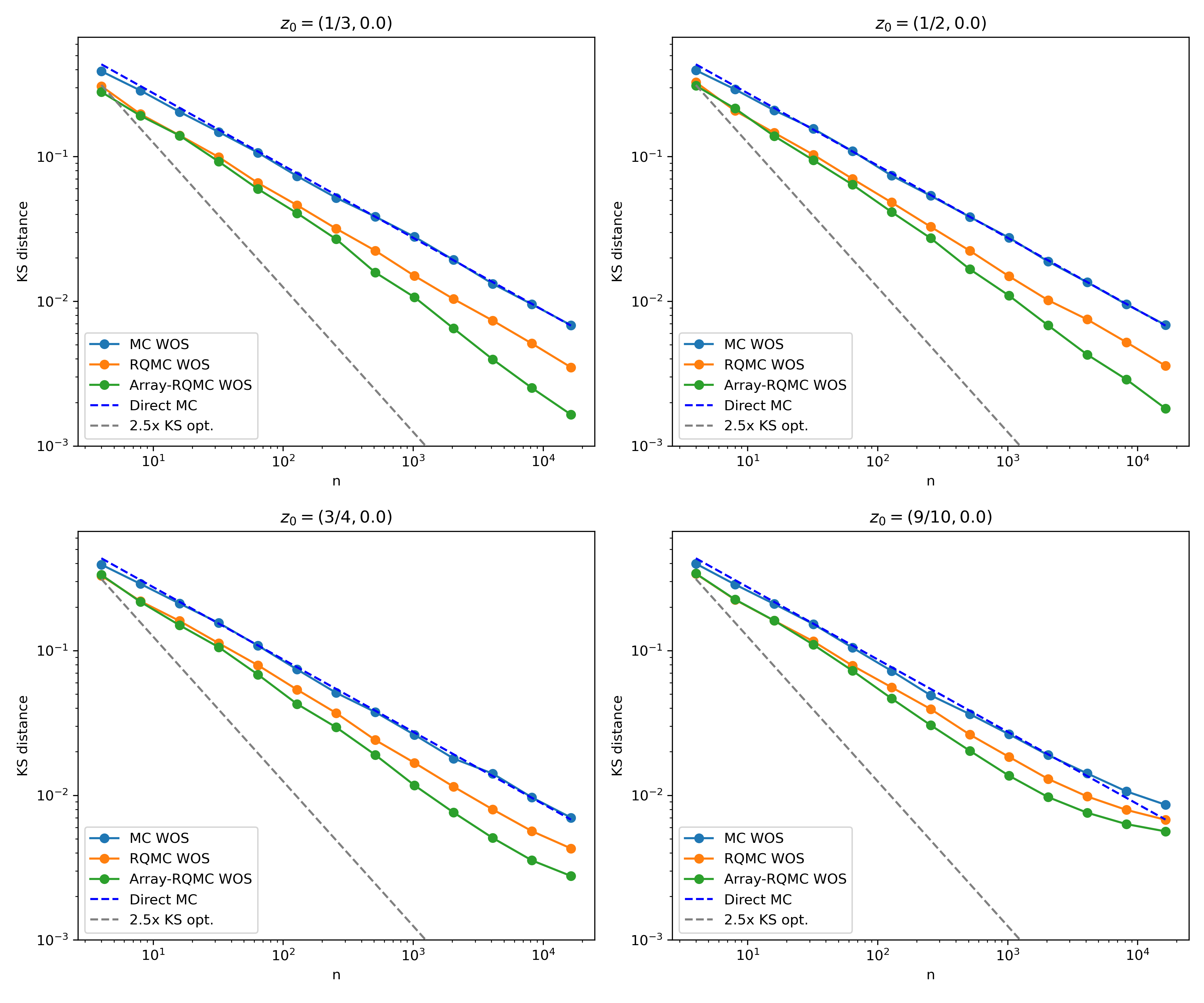}
    \caption{Kolmogorov-Smirnov distance between $p_{\bsz}$ and $\hat p_{\bsz}$ averaged over $100$ replicates, for $4$ different starting points in the unit disk. The gray dashed line is proportional to the optimal values $1/(2n)$. The RQMC points we use here are scrambled Sobol' points.}
    \label{fig:ks-distance}
\end{figure}

\subsection{ANOVA,  Sobol' indices and mean dimension}

RQMC is known to benefit greatly when integrands have a low effective dimension \citep{cafmowen}.  If the integrand is dominated by the main effects and low-order interactions in its ANOVA decomposition, that combines well with low-dimensional coordinate projections of RQMC points having much lower discrepancy than Monte Carlo points have in those dimensions.  There is also an effect that often makes low order ANOVA components smoother than the original integrand \citep{grie:kuo:sloa:2010}.

Notions of effective dimension are conveniently defined through the ANOVA decomposition of
$L_2[0,1]^d$. Let $f$ be a real-valued function on $[0,1]^d$, with $\mu= \int_{[0,1]^d}f(\bsx)\rd\bsx$ and $\sigma^2= \int_{[0,1]^d}(f(\bsx)-\mu)^2\rd\bsx<\infty$. We write
\begin{align}\label{eq:anova}
f(\bsx) = \sum_{u\subseteq1:d}f_u(\bsx)
\end{align}
where the effect $f_u$ depends on $\bsx$ only through $\bsx_u\in[0,1]^{|u|}$ and is defined recursively by
$$
f_u(\bsx) = \int_{[0,1]^{d-|u|}}\biggl(f(\bsx)-\sum_{v\subsetneq u}f_v(\bsx)\biggr)\rd\bsx_{-u}
= \int_{[0,1]^{d-|u|}} f(\bsx)\rd\bsx_{-u}
-\sum_{v\subsetneq u}f_v(\bsx).$$
The recursion starts with $f_\emptyset$,  the constant function equal to $\mu$ everywhere. The effect $f_u$ has variance component $\sigma^2_u=\var(f_u(\bsx))$. The ANOVA is very interpretable because $\sigma^2 = \sum_{u\subseteq1:d}\sigma^2_u$ shows how the importance of the components of $\bsx$ can be decomposed over subsets of variables.

The Sobol' indices of $f$ 
$$
\ult^2_u = \sum_{v\subseteq u}\sigma^2_v\quad\text{and}\quad\olt^2_u = \sum_{v\cap u\ne\emptyset}\sigma^2_v
$$
provide two different ways to quantify the importance of $\bsx_u$.  Normalized versions $\uls_u=\ult^2_u/\sigma^2$ and $\ols_u=\olt^2_u/\sigma^2$ describe the proportion of variance attributable to subsets. The mean dimension of $f$ in the superposition sense is defined as
$$
\nu(f) = \sum_{u\subseteq1:d}\sigma^2_u|u|
\Bigm/\sum_{u\subseteq1:d}\sigma^2_u = \sum_{j=1}^d \frac{\olt^2_j}{\sigma^2}
$$
with the final identity based on a result in \cite{meandim}. This identity
$$
\olt^2_j = \frac12\e\bigl( (f(\bsx)-f(\bsx_{-j}{:}z_j))^2\bigr)
$$
from \cite{jans:1999}
allows us to get an unbiased estimate of $\olt^2_j$ as an expectation over $d+1$ variables without ever constructing estimates of the ANOVA effects $f_u$. In the last identity, $\bsx_{-j}{:}z_j$ denotes the hybrid point obtained from $\bsx$ by replacing its $j$th coordinate $x_j$ with $z_j$, where $\bsz$ is an independent sample from the same distribution as $\bsx$. The mean dimension is very easy to estimate compared to the effective dimension of \cite{cafmowen} which is the smallest integer $s$ with $\sum_{|u|\le s}\sigma^2_u\ge 0.99\sigma^2$.

\subsection{Vector-wise Sobol' indices}

In Array-RQMC-WoS we cannot replace a value $x_{ik}$ by an independently sampled value $x_{ik}^*$, because that would break the equidistribution in column $k$.  What we can do is replace the whole vector $\vec{\bsx}_k=(x_{1k},x_{2k},\dots,x_{nk})^\tran$ by an independently sampled vector $\vec{\bsx}_{k}^*\in[0,1)^n$.  That will give us a measure of the importance of step $k$ to the outcome. 

The ANOVA can be defined for any $L_2$ function of independent quantities.  Those quantities do not have to be real numbers. One generalization is to partition a set of variables into groups and define an ANOVA at group level.  \cite{sobo:1993} mentions this possibility and \cite{salt:etal:2008} describe how it can lead to computational savings in high dimensional settings.  \cite{jans:ross:walt:daam:1994} describe an early application to agronomy.

We let $\cx\in[0,1)^{n\times sK}$ represent the entire matrix of values that drive the WoS computation. We write $\cx=\begin{pmatrix}\vec{\bsx}_1,\vec{\bsx}_2,\cdots,\vec{\bsx}_{sK}
\end{pmatrix}$. Then, we view
$\hat\mu$ as a function on an $sK$-fold Cartesian product of $[0,1)^n$. All of our algorithms sample $\vec{\bsx}_k$ independently of each other but RQMC versions include dependence within each $\vec{\bsx}_k$.


The ANOVA decomposition is defined for any square integrable function of $sK$ independent quantities.  Our Array-RQMC-WoS algorithm then provides a function
$F:V_n^{sK}\to\real$
where $V_n = [0,1)^n$ and $F(\cx)=\hat\mu$.
This function $F$ has an ANOVA decomposition with effects $F_u:V_n^{sK}\to\real$, variance components $\sigma^2_u(F)$ and Sobol' indices $\ult^2_u(F)$, $\olt^2_u(F)$ for $u\subseteq1{:}sK$.
Now, we define a vector-wise mean dimension as
$$
\nu(F)=\frac1{\sigma^2(F)}\sum_{k=1}^{sK}\olt^2_k(F)\quad\text{for}\quad
\olt^2_k(F) = \frac12\e\bigl[ (\hat\mu(\cx)-\hat\mu(\cx_{-k}{:}\cx_{k}^*))^2\bigr].
$$

It is instructive to see how the vector-wise mean dimension operates for MC and RQMC.
For MC
$$
F(\cx) = \frac1n\sum_{i=1}^nf(\bsx_i)
= \frac1n\sum_{i=1}^n\sum_{u\subseteq1{:}sK}f_u(\bsx_i)
$$
and then
\begin{align*}
\int_{V_n^{sK-|u|}}F(\cx)\rd \cx_{-u}
&=\int_{[0,1)^{n\times (sK-|u|)}}\frac1n\sum_{i=1}^nf(\bsx_i)
\prod_{i=1}^n\prod_{k\not\in u}\rd x_{ik}\\
&=\frac1n\sum_{i=1}^n\int_{[0,1)^{sK-|u|}}f(\bsx_i)
\prod_{k\not\in u}\rd x_{ik}
\end{align*}
from which $F_u(\cx)=(1/n)\sum_{i=1}^nf_u(\bsx_i)$.
It follows that
$\sigma^2_u(F) = \sigma^2_u(f)/n$, so
$\uls_u(F)=\uls_u(f)$ and $\ols_u(F)=\ols_u(f)$.
This implies that the mean dimensions coincide and so
$$\nu_{\mc}(F) = \nu_{\mc}(f).$$

For randomly shifted lattices, we can write $f(\bsx)=f_0(\bsx)+f_1(\bsx)$ where $f_0$ and $f_1$ have variances $\sigma^2_0$ and $\sigma^2_1$ respectively under MC sampling. Here $f_1$ is a sum of Fourier contributions over a dual lattice (minus the origin) and $f_0$ has all other Fourier contributions. Then $\var(\hat\mu_{\mc})=(\sigma^2_0+\sigma^2_1)/n$ while $\var(\hat\mu_{\rqmc})=\sigma^2_1$ (not divided by $n$). See \cite{lecu:lemi:2000}. Under randomly shifted lattice rules
$$
\nu_{\lattice}(F)=\nu_{\mc}(f_1).
$$
The function $f_1$ contains high-frequency components of $f$ and those are generally harder to integrate and we then expect typical problems to have $\nu_{\lattice}(F)=\nu_{\mc}(f_1)>\nu_{\mc}(f)=\nu_{\mc}(F)$.

The situation for scrambled nets is a bit more complicated. 
Then $\var(\hat\mu)=\sum_{|u|>0}\var(\hat\mu_u)$ where $\hat\mu_u = (1/n)\sum_{i=1}^nf_u(\bsx_i)$.
We can write $\var(\hat\mu_u)=\tilde\Gamma_u\sigma^2_u/n$. The terms $\tilde\Gamma_u$ depend on $n$ and $f$ but are subject to upper bounds $\Gamma_u$, known as gain constants, that are uniform over $f\in L_2[0,1)^d$ and depend on the point set being scrambled.

With this notation, the column-wise mean dimension for scrambled nets is
$$
\nu_{\net}(F) = \frac{\sum_{u\subseteq1:sK}|u|\sigma^2_u(f)\tilde\Gamma_u}
{\sum_{u\subseteq1:sK}\sigma^2_u(f)\tilde\Gamma_u}.
$$
We expect smaller $\tilde\Gamma_u$ for small $|u|$ and downweighting the small cardinalities has the effect of raising $\nu(F)$. We then expect $\nu_{\net}(F)>\nu(f)$.  We can understand an RQMC-friendly integrand as one with a small value of $\nu(f)$. 
It may very well have a large value of $\nu(F)$ if most of the low dimensional variation is integrated without error.

For scrambled Sobol' points, $f_0$ has the components of a Fourier-Walsh decomposition that the nets integrate without error and $f_1$ has other components that contribute uncorrelated errors that each have a variance $O(1/n)$.  The constants within $O(1/n)$ vary with the Walsh functions. For a discussion of those gain coefficients in scrambled Sobol' points see \cite{gainpowtwo}.  The column-wise mean dimension for scrambled nets is then the mean dimension of $f_1$ after weighting each Walsh function by the square root of a gain coefficient.

In summary, we think of $\nu(F)$ as the mean dimension of the hard to integrate parts of $f$ for whatever method we are using, up to gain factor weights where necessary.  For plain MC the hard to integrate parts are all except the constant term and so $\nu_{\mc}(F)=\nu_{\mc}(f)$.

To understand the dimensional effect of Array-RQMC-WoS we can compute its column-wise mean dimension. Just as RQMC-WoS is compared to MC-WoS, we compare the mean dimension of Array-RQMC-WoS to an analogous Array-MC-WoS. An Array-MC-WoS algorithm chooses angles at each step independently of the points' location on the Hilbert curve. Array-MC-WoS and MC-WoS have the same accuracy as we see next, but they have different mean dimensions. 

\subsection{Array-MC-WoS}

\newcommand{\bszikmc}{\bsz_{ik}^{\mathrm{MC}}}
\newcommand{\bszikamc}{\bsz_{ik}^{\mathrm{AMC}}}
\newcommand{\allzkmc}{\mathcal{Z}_{k}^{\mathrm{MC}}}
\newcommand{\allzkamc}{\mathcal{Z}_{k}^{\mathrm{AMC}}}
\newcommand{\allxkmc}{\mathcal{X}_{k}^{\mathrm{MC}}}
\newcommand{\allxkamc}{\mathcal{X}_{k}^{\mathrm{AMC}}}
\newcommand{\allxknextmc}{\mathcal{X}_{k+1}^{\mathrm{MC}}}
\newcommand{\allxknextamc}{\mathcal{X}_{k+1}^{\mathrm{AMC}}}
\newcommand{\allzknextamc}{\mathcal{Z}_{k+1}^{\mathrm{AMC}}}
\newcommand{\allzknextmc}{\mathcal{Z}_{k+1}^{\mathrm{MC}}}
\newcommand{\allzkinitmc}{\mathcal{Z}_{0}^{\mathrm{MC}}}
\newcommand{\allzkinitamc}{\mathcal{Z}_{0}^{\mathrm{AMC}}}
\newcommand{\allz}{\mathcal{Z}}

We contrast Array-MC-WoS with MC-WoS by denoting the positions of walker $i$ at step $k$ under these algorithms by $\bszikamc\in\Omega$ and $\bszikmc\in\Omega$ respectively. In MC-WoS, the state of all $n$ walkers at step $k$ is given by $\allzkmc\in\real^{n\times d}$ whose $i$th row is $\bszikmc$. The analogous quantity for Array-MC-WoS is $\allzkamc\in\real^{n\times d}$. First, we show that these matrices have the same distribution.

\begin{prop}

For every $k\ge0$, $\allzkmc\eqd\allzkamc$.

\end{prop}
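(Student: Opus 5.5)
Because the statement is about the joint law of the whole $n\times d$ state matrix, I would argue by induction on $k$. The natural intermediate claim is that the joint law of $\allzkamc$ (rows taken in the sorted order the algorithm produces) equals that of $\allzkmc$. The base case $k=0$ is immediate: every row of both matrices equals $\bsz_0$, so both are the same deterministic matrix.

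For the inductive step, fix $k$ and condition on $\allzkamc$, or more precisely on the sigma-field generated by everything up to step $k$. In Array-MC-WoS the walkers are first permuted by a permutation $\pi_k$, which is a measurable function of $\allzkamc$ alone (Hilbert keys, with $+\infty$ for converged walkers). They are then advanced using a fresh matrix $\allxknextamc$ of IID $\runif[0,1)$ entries, independent of the past. Row $i$ of the new state is $\Phi(\bszikamc{}_{\pi_k(i)},\bsx_{i,k+1})$, where $\Phi(\bsz,\bsx)=\bsz+\dist(\bsz,\partial\Omega)\psi_0(\bsx)$ for active $\bsz$, and $\Phi(\bsz,\bsx)=\bsz$ (after projection) for inactive $\bsz$.

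The key fact is that conditional on $\allzkamc$, the permutation $\pi_k$ is fixed, while the rows of $\allxknextamc$ remain IID uniform. Permuting the assignment of IID input rows to walkers therefore leaves the conditional law of the collection of updates unchanged. Equivalently, $(\bsx_{\pi_k^{-1}(i),k+1})_i$ has the same joint law as $(\bsx_{i,k+1})_i$. Consequently, the conditional law of the updated matrix, with rows permuted back to the original walker labels, is the product kernel $\prod_i P(\bsz_{ik},\cdot)$. This is exactly the MC-WoS transition from $\allzkmc$ to $\allzknextmc$.

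Combining this identical Markov kernel with the inductive hypothesis $\allzkmc\eqd\allzkamc$ gives equality of the joint laws at step $k+1$, by integrating the kernel against the common law.

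The main obstacle is bookkeeping about row labels. Array-MC-WoS reorders rows at each step, so literal equality in distribution of the matrices requires care. If labels are carried along with the walkers, the argument above gives exact equality. If instead rows are stored in sorted order, I would either prove equality of the law of the unordered multiset of states, or note that MC-WoS rows are exchangeable. Exchangeability holds because MC-WoS gives IID walkers, so any data-dependent permutation of its rows has the same distribution. For the second route I would state explicitly that a measurable, state-dependent permutation of an exchangeable IID collection preserves its joint law only in the sense of the empirical measure. I would therefore formulate the proposition, and the induction, at the level of walker-labelled trajectories to avoid this subtlety.
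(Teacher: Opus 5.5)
Your argument is correct and is essentially the paper's proof: induction on $k$, using that the Hilbert-sort permutation is a measurable function of $\mathcal{Z}_k^{\mathrm{AMC}}$, so conditionally on the current state the permuted input rows are still IID uniform, the step-$(k+1)$ transition kernel coincides with that of MC-WoS, and integrating against the common law from the induction hypothesis finishes it. The paper resolves your labelling issue as you propose, by keeping walker labels fixed and permuting the input rows (walker $i$ receives component $\varpi_i(k)$ of $\vec\bsx_k$); do drop your passing claim that any data-dependent permutation of IID rows preserves their joint law, since sorting does not, as you yourself then acknowledge.
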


\begin{proof}
If $k\in\{0,1\}$ then $\allzkmc=\allzkamc$ (since we do not sort the $\bsz_{i0}$) which implies that $\allzkmc\eqd\allzkamc$ for those $k$. We will proceed by induction showing that if $\allzkmc\eqd\allzkamc$ then $\allzknextmc\eqd\allzknextamc$. 

For $i=1,\dots,n$, let $\bsx_{i,k+1}\simiid\runif([0,1)^s)$ be the vectors used to advance walker $i$ from $\bsz_{ik}$ to $\bsz_{i,k+1}$ in MC-WoS and let $\allxknextmc\in\real^{n\times s}$ have $i$th row $\bsx_{i,k+1}$. 
For plain MC-WoS, walker $i$ evolves according to
$\bsz_{i,k+1}^{\mathrm{MC}} = \phi\bigl(\bsz_{ik}^{\mathrm{MC}}, \bsx_{i,k+1}\bigr)$
where $\phi$ is the WoS update map. The ensemble then evolves according to $\allzknextmc =\Phi( \allzkmc,\allxknextmc)$ where $\Phi$ applies $\phi$ row by row. The Array-MC-WoS ensemble evolves as $\allzknextamc=\Phi(\allzkamc,\allxknextamc)$ where $\allxknextamc$ has the same rows as $\allxknextmc$ in a permuted order. 

That permutation is deterministic conditionally on $\allzkamc$ and so $\allxknextamc$ has the same conditional distribution as $\allxknextmc$.  Then for measurable $A\subset[0,1)^{n\times d}$
\begin{align*}
\Pr( \allzknextamc\in A) &=
\e\left( \Pr\bigl( \Phi(\allzkamc,\allxknextamc)\in A\giv \allzkamc\bigr)\right)\\
&=
\e\left( \Pr\bigl( \Phi(\allzkamc,\allxknextmc)\in A\giv \allzkamc\bigr)\right)\\
&=\Pr\bigl( \Phi(\allzkamc,\allxknextmc)\in A\bigr).
\end{align*}
Now we assume that $\allzkamc\eqd\allzkmc$.
Then $\Pr( \allzknextamc\in A)=\Pr(\allzknextmc\in A)$
and so $\allzknextamc\eqd\allzknextmc$,  completing the induction.
\end{proof}  

Because $\allzkamc\eqd\allzkmc$, a fixed $k$ version of Array-MC-WoS has the same distribution and MSE as MC-WoS.  What is not obvious at first, is that these algorithms can have different column-wise mean dimensions. We illustrate the difference for a problem with $\Omega\subset\real^2$ and no source term. 

For some $x\in[0,1)$ a walker moves in direction $\theta(x)=(\cos(2\pi x),\sin(2\pi x))^\tran$. The position of all $n$ walkers is given by $\allz_k \in\real^{n\times 2}$ with $i$th row $\bsz_{ik}$. We use $\vec\bsx_k = (x_{1k},\dots,x_{nk})^\tran\in[0,1)^n$ to represent all $n$ directions used to advance the walkers to step $k$.  Walker $i$ gets component $\varpi_i(k)$ of $\vec\bsx_{k}$ where $\varpi(k)$ is the permutation of $(1,2,\dots,n)$ from the Hilbert sort, so $\varpi(k)$ depends on $\vec\bsx_1,\dots,\vec\bsx_{k-1}$.

The first step from $\allz_0$ yields $\allz_1$ identically for MC-WoS and Array-MC-WoS.  For step $2$ under Array-MC-WoS
\begin{align*}
\bsz_{i2} &= \bsz_{i1} + r_{i2}\theta(x_{\varpi_i(2),2}),\quad\text{so}\\
\allz_2(\vec\bsx_1,\vec\bsx_2) &=
\allz_1(\vec\bsx_1) + \diag(\vec \bsr_2(\allz_1(\vec{\bsx}_1)))
\Theta_2(\vec\bsx_1,\vec\bsx_2)
\end{align*}
where $\vec\bsr_k$ has all $n$ radii used at step $k$ and
$\Theta_2\in[-1,1]^{n\times 2}$ has the $i$th row $\theta(x_{\varpi_i(2),2})$. Here $\Theta_2$ depends on $\vec\bsx_1$ through $\varpi(2)$. For plain MC-WoS, $\Theta_2$ only depends on $\vec\bsx_2$.  This distinction continues to later steps where $\Theta_k$ depends on $\vec\bsx_1,\dots,\vec\bsx_k$ under Array-MC-WoS but only depends on $\vec\bsx_k$ under MC-WoS.  If $k<k'$, then under Array-MC-WoS the vector $\vec{\bsx}_k$ can affect more subsequent transitions than $\vec{\bsx}_{k'}$. In contrast, under MC-WoS, each $\vec{\bsx}_k$ affects only the $k$th transition, so it makes sense that those algorithms can have different mean dimensions.

We computed vector-wise Sobol’ indices for the gasket example for $6$ methods: MC-WoS, Sobol-WoS, Lattice-WoS, Array-MC-WoS, Array-Sobol-WoS and Array-Lattice-WoS. The walks used the usual $\varepsilon$-stopping rule, and the Sobol' analysis was applied to the first $20$ step-vectors, giving a partial estimate of the mean dimension. The corresponding mean dimensions are reported in Table \ref{tab:mean-dimensions}. We observe that moving from plain MC to plain RQMC increases the mean dimension, suggesting that RQMC removes part of the lower-dimensional variance. The mean dimension increases still further for Array-RQMC, which indicates that even more of the lower-dimensional variance is removed, leaving a more high-dimensional residual variance. We see that the lattice sampling leaves behind a residual with a higher column-wise mean dimension than the Sobol' points. This is consistent with the Array-Lattice-WoS having somewhat better accuracy than Array-Sobol-WoS in Table~\ref{tab:mserf-summaries}.

\begin{table}[t]
\centering
\begin{tabular}{lcc}
\toprule
Base algorithm & $\hat\nu(F)$ non-array & $\hat\nu(F)$ array \\
\midrule
MC-WoS      & $\phz3.24$ & $\phz2.13$ \\
Sobol-WoS   & $\phz7.43$ & $12.58$    \\
Lattice-WoS & $\phz8.16$ & $13.81$    \\
\bottomrule
\end{tabular}
\caption{
Estimated vector-wise mean dimensions of the WoS estimators for the gasket example with $n=4096$ using $800$ replicates.
\label{tab:mean-dimensions}}
\end{table}

The Sobol' indices that contribute to mean dimension are shown in Figure~\ref{fig:sobol_indices}. The left panel plots normalized indices $\olt^2_k/\sigma^2$ versus $k$. Over 90\% of the variance in MC comes from effects that include the first step variable $\vec{\bsx}_1$ while the comparable number for $\vec\bsx_{10}$ is just over 10\%. The normalized indices decay more slowly for RQMC and even more slowly for Array-RQMC.  The Array-RQMC value of $\olt^2_{20}$ is still quite large so it is clear that increasing $k$ would increase $\nu(F)$ for a $k$-step algorithm. We see a difference between MC and Array-MC for small $k$, especially $k=1$. The greater importance of step 1 in Array-MC-WoS explains its lower column-wise mean dimension in Table~\ref{tab:mean-dimensions}.

The right panel of Figure~\ref{fig:sobol_indices} shows that the unnormalized Sobol' indices $\olt^2_k(F)$ generally decrease as we replace MC-WoS by RQMC-WoS and then decrease again as we replace RQMC-WoS by Array-RQMC-WoS. The reduction factors are largest for small $k$ and close to one for large $k$.

\begin{figure}
    \centering
    \includegraphics[width=1\linewidth]{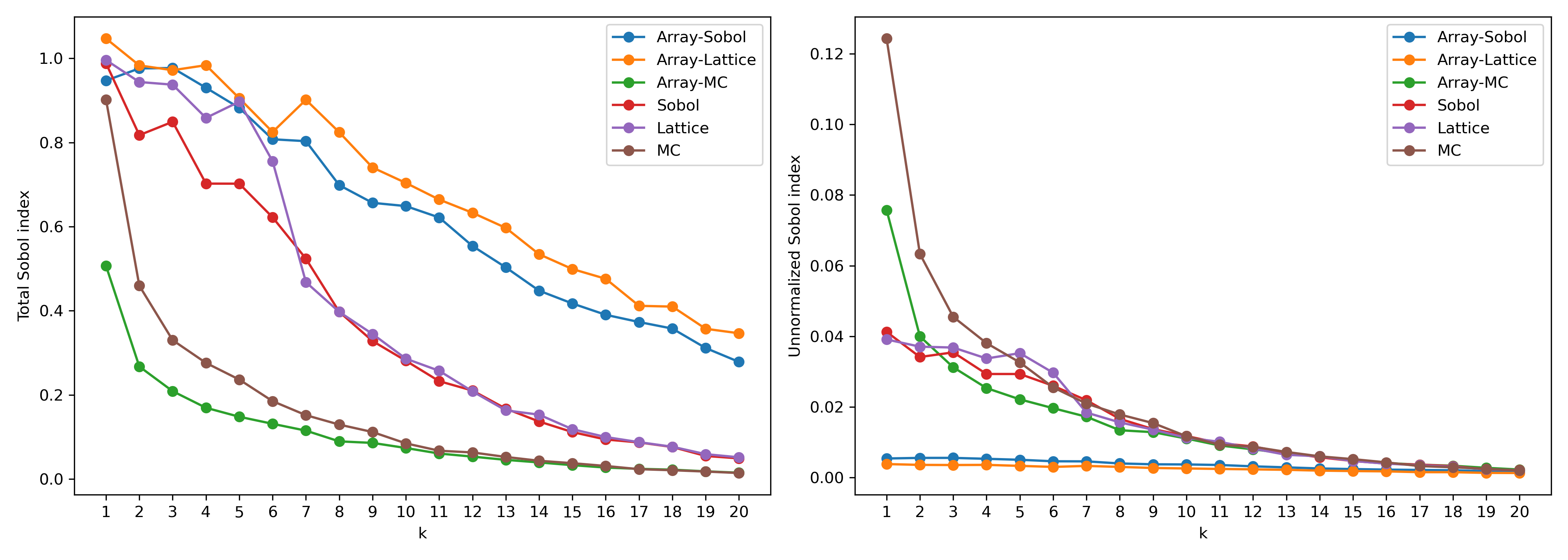}
    \caption{Normalized and unnormalized total Sobol' indices for the gasket example, with $\bsz_0=\bsz_{*}$, for trajectories of size $n=4096$, computed over $800$ replicates for $k=1,\dots,20$.
    }
    \label{fig:sobol_indices}
\end{figure}

\section{Discussion}\label{sec:discussion}

We have found that using Array-RQMC in WoS improves accuracy by some very large factors, in some instances reducing the MSE by more than 3000-fold.  The theoretical understanding of Array-RQMC is not yet advanced enough to explain this improvement.  There is some recent theoretical study of RQMC-WoS in \cite{rqmcwos:tr} giving variance rates of $O(n^{-1-1/k})$ for some integrands at step $k$, but that method does not perform nearly as well as Array-RQMC-WoS in our examples.

One interesting finding was that Array-Lattice-WoS consistently outperformed Array-Sobol-WoS, although the improvement was by modest factors.  Perhaps the lattice methods benefit from the periodic nature of the WoS integrands. Those integrands are not smooth enough to give lattices a better RQMC rate, though possibly that could explain a better constant.  On the other hand, Lattice-WoS does not appear to have an advantage over Sobol-WoS.

Another finding was that the variance or MSE versus $n$ generally showed mild nonlinearity in log-log plots, as if the slope were improving. It would be very interesting to find an explanation for that behavior.

\section*{Acknowledgments}

We thank Rohan Sawhney, Bob Carpenter,  Charlie Epstein and Frances Kuo for helpful comments.

\bibliography{array_wos}
\bibliographystyle{chicago}
\end{document}